\def\opn#1#2{\def#1{\operatorname{#2}}} % to make operators
\opn\chara{char} \opn\length{\ell} \opn\pd{pd} \opn\rk{rk}
\opn\projdim{proj\,dim} \opn\injdim{inj\,dim} \opn\rank{rank}
\opn\depth{depth} \opn\grade{grade} \opn\height{height}
\opn\embdim{emb\,dim} \opn\codim{codim}
\opn\Tr{Tr} \opn\bigrank{big\,rank}
\opn\superheight{superheight}\opn\lcm{lcm}
\opn\trdeg{tr\,deg}%
\opn\reg{reg} \opn\lreg{lreg} \opn\skel{skel}
\opn\multideg{multideg}
\opn\div{div} \opn\Div{Div} \opn\cl{cl} \opn\Cl{Cl}
\opn\Spec{Spec} \opn\Supp{Supp} \opn\supp{supp} \opn\Sing{Sing}
\opn\Ass{Ass}
\opn\Ann{Ann} \opn\Rad{Rad} \opn\Soc{Soc}
\opn\Ker{Ker} \opn\Coker{Coker} \opn\Im{Im} \opn\Hom{Hom}
\opn\Tor{Tor} \opn\Ext{Ext} \opn\End{End} \opn\Aut{Aut}
\opn\id{id}
\opn\nat{nat}
\opn\pff{pf}%   \pf exists already
\opn\Pf{Pf} \opn\GL{GL} \opn\SL{SL} \opn\mod{mod} \opn\ord{ord}
\opn\aff{aff} \opn\con{conv} \opn\relint{relint} \opn\st{st}
\opn\lk{lk} \opn\cn{cn} \opn\core{core} \opn\vol{vol}
\opn\link{link} \opn\star{star} \opn\skel{skel} \opn\Reg{Reg}
\opn\gr{gr}
\def\pot#1#2{#1[\kern-0.28ex[#2]\kern-0.28ex]}
\opn\dirlim{\underrightarrow{\lim}}
\opn\inivlim{\underleftarrow{\lim}}
\def\Implies{\ifmmode\Longrightarrow \else
     \unskip${}\Longrightarrow{}$\ignorespaces\fi}
\def\implies{\ifmmode\Rightarrow \else
     \unskip${}\Rightarrow{}$\ignorespaces\fi}
\def\iff{\ifmmode\Longleftrightarrow \else
     \unskip${}\Longleftrightarrow{}$\ignorespaces\fi}
\newtheorem{thm}{Theorem}[section]
\newtheorem{cor}[thm]{Corollary}
\newtheorem{lem}[thm]{Lemma}
\newtheorem{prop}[thm]{Proposition}
\newtheorem{defn}[thm]{Definition}
\newtheorem{exam}[thm]{Example}
\newtheorem{rem}[thm]{Remark}
\numberwithin{equation}{section}
\begin{document}

\bibliographystyle{amsplain}

%\date{}
\title{ Binomial edge ideals with pure resolutions }
\author{ Dariush Kiani$^{1}$ and Sara Saeedi Madani$^{2}$ }
\thanks{2010 \textit{Mathematics Subject Classification.} 05E40, 05C25, 16E05, 13C05}
\thanks{\textit{Key words and phrases.} Binomial edge ideal, Pure resolution, Linear strand. }
\thanks{ $^{1,2}$Department of Pure Mathematics,
 Faculty of Mathematics and Computer Science,
 Amirkabir University of Technology (Tehran Polytechnic),
424, Hafez Ave., Tehran 15914, Iran, and School of Mathematics, Institute for Research in Fundamental Sciences (IPM),
P.O. Box 19395-5746, Tehran, Iran. }
\thanks{$^{1}$dkiani@aut.ac.ir, dkiani7@gmail.com, $^{2}$sarasaeedi@aut.ac.ir }

\begin{abstract}
We characterize all graphs whose binomial edge ideals have pure resolutions. Moreover, we introduce a new switching of graphs which does not change some algebraic invariants of graphs, and using this, we study the linear strand of the binomial edge ideals for some classes of graphs. Also, we pose a conjecture on the linear strand of such ideals for every graph.
\end{abstract}

\maketitle

\section{ Introduction }\label{Introduction}

\noindent The binomial edge ideal of a graph was introduced in \cite{HHHKR} and \cite{O}.
Let $G$ be a finite simple graph with vertex set $[n]$ and edge set $E(G)$ and let $S=K[x_1,\ldots,x_n,y_1,\ldots,y_n]$ be the polynomial ring over a field $K$. Then the \textbf{binomial edge ideal} of $G$ in $S$,
denoted by $J_G$, is generated by binomials $f_{ij}=x_iy_j-x_jy_i$, where $i<j$ and $\{i,j\}\in E(G)$. One could see this ideal as an ideal
generated by a collection of 2-minors of a $(2\times n)$-matrix whose entries are all indeterminates. Many of the algebraic properties of such ideals
were studied in \cite{D}, \cite{EHH},  \cite{EZ}, \cite{HHHKR}, \cite{KS}, \cite{MM}, \cite{SK} and \cite{Z}. In \cite{HHHKR}, the authors determined all graphs whose binomial edge ideal have a Gr\"{o}bner basis with respect to the lexicographic order induced by $x_1>\cdots >x_n>y_1>\cdots >y_n$, and called this class of graphs, \textbf{closed graphs}. There are also some combinatorial descriptions for these graphs. In \cite{EHHQ}, the authors introduced a generalization of the binomial edge ideal of a graph and some properties of this ideal were studied in \cite{EHHQ} and \cite{SK1}.

In this paper, we characterize all binomial edge ideals with pure resolutions. We also try to compute the linear strand of binomial edge ideals. We introduce a new method of switching for graphs which preserves many algebraic invariants of the binomial edge ideals. This paper is organized as follows. In Section~\ref{Binomial edge ideals with pure resolutions}, we classify all graphs such that their binomial edge ideals have pure resolutions, which generalizes the result of Schenzel and Zafar who studied complete bipartite graphs. In Section~\ref{Free cut edge switching and linear strand of binomial edge ideals}, we introduce a new switching method for graphs, where the graded Betti numbers, regularity and projective dimension of binomial edge ideals are invariant under it. Using that, we compute the graded Betti numbers of some classes of graphs, generalizing some of the results of Zahid and Zafar in \cite{ZZ}. Also, we compute the linear strand of binomial edge ideals of some classes of graphs and pose a formula with respect to graphical terms as a conjecture, for an arbitrary graph.

\section{Binomial edge ideals with pure resolutions }\label{Binomial edge ideals with pure resolutions}

\noindent In this section, we investigate about pure resolutions of binomial edge ideals. Let $I$ be a homogeneous ideal of $S$ whose generators
all have degree $d$. Then $I$ has a \textbf{$d$-pure resolution} (or pure resolution) if its minimal graded free resolution can be written in the form
$$0\rightarrow S(-d_p)^{\beta_{p}(I)}\rightarrow\cdots\rightarrow
S(-d_1)^{\beta_{1}(I)}\rightarrow I\rightarrow0,$$ where $d=d_1$.
In addition, we say that $I$ has a \textbf{$d$-linear resolution} (or linear resolution) if
for all $i\geq 0$, $\beta_{i,j}(I)=0$ for all $j\neq{i+d}$. In \cite{SK} and \cite{SK1}, all binomial edge ideals with linear resolution are characterized.

\begin{thm}\label{linear}
\cite[Theorem~2.1]{SK} Let $G$ be graph. Then $J_G$ has a linear resolution if and only if $G$ is a complete graph.
\end{thm}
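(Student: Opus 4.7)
The \emph{if} direction is classical: when $G=K_n$, the ideal $J_G$ is the ideal of $2\times 2$ minors of the generic $(2\times n)$-matrix with rows $x_1,\dots,x_n$ and $y_1,\dots,y_n$, and the Eagon--Northcott complex furnishes a $2$-linear minimal free resolution. For the \emph{only if} direction I would argue contrapositively, by exhibiting two small induced subgraphs that already force the regularity of the binomial edge ideal to be at least $3$. Since an isolated vertex of $G$ contributes only flat extra variables and hence leaves the graded Betti table of $J_G$ unchanged, I would first reduce to the case where $G$ has no isolated vertex.

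The candidate obstructions are the path $P_3$ on three vertices and the graph $2K_2$ consisting of two disjoint edges. For each of these graphs $H$, the ideal $J_H$ is minimally generated by two quadratic binomials $f,g$ sharing no common polynomial factor in $S$, so any linear syzygy $\alpha f+\beta g=0$ would force $g\mid \alpha$, which is impossible for degree reasons; the only minimal first syzygy is therefore the Koszul relation $gf-fg=0$ in degree $4$, and the minimal graded resolution of $J_H$ takes the form
\[
0\to S(-4)\to S(-2)^{2}\to J_H\to 0,
\]
so $\reg(J_H)=3$. Next, I would verify the purely combinatorial dichotomy that every non-complete graph without isolated vertices contains $P_3$ or $2K_2$ as an induced subgraph: if $G$ has no induced $P_3$, then $G$ is a disjoint union of cliques, and, since $G$ is not itself a single clique and has no isolated vertices, two of these cliques each contain an edge, yielding an induced $2K_2$.

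The remaining ingredient is the standard monotonicity of regularity for binomial edge ideals: for every induced subgraph $H\subseteq G$ one has $\reg(J_H)\le\reg(J_G)$. Granting this, the previous paragraph gives $\reg(J_G)\ge 3$ whenever $G$ is not complete, ruling out a $2$-linear resolution. The only non-elementary input, and hence the main obstacle, is precisely this monotonicity; in a self-contained argument one would either cite it from the literature on binomial edge ideals or establish it via the retraction $S_G\twoheadrightarrow S_H$ that kills the extra variables, comparing $\Tor$ groups over the two polynomial rings by a short Koszul spectral sequence. Everything else reduces to the direct Koszul computation for $P_3$ and $2K_2$ together with the elementary graph-theoretic dichotomy above.
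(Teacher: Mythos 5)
Your argument is correct, and it is worth noting that the paper itself offers no proof of this statement --- it is quoted verbatim from \cite[Theorem~2.1]{SK} --- so the comparison can only be with the toolkit the paper deploys around it. That toolkit is essentially yours: the Eagon--Northcott resolution of $J_{K_n}$ (used again in the proof of Proposition~\ref{lollipop}), the observation that the generators of $J_{P_n}$ form a regular sequence so that the Koszul complex gives $\beta_{1,4}\neq 0$ (Remark~\ref{special graphs}), and the monotonicity $\beta_{i,j}(J_H)\le\beta_{i,j}(J_G)$ for induced subgraphs $H$, which is exactly Proposition~\ref{induced} (\cite[Proposition~8]{SK1}) and which you correctly isolate as the one non-elementary input; the algebra-retract proof you sketch is the standard one. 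Where you genuinely differ from the source is in the reduction: \cite{SK} obtains non-linearity from $\beta_{1,4}(J_G)\neq 0$ for connected non-complete graphs via their Theorem~2.2, whereas you reduce cleanly to the two minimal obstructions $P_3$ and $2K_2$ through the cluster-graph dichotomy ($P_3$-free $\Rightarrow$ disjoint union of cliques), which handles connected and disconnected graphs uniformly and keeps the computation to a single Koszul syzygy in degree $4$. One small caveat that your own reduction exposes: since isolated vertices do not change the Betti table, the statement as literally phrased fails for a complete graph with an isolated vertex adjoined, so ``complete'' must be read up to isolated vertices (equivalently, one assumes $G$ has no isolated vertices, as Theorem~\ref{pure} does); this is a defect of the quoted statement, not of your proof.
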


A similar question could be asked about pure resolutions. Indeed, characterizing all binomial edge ideals which has pure resolutions with respect to combinatorial terms would be interesting too. On the other hand, pure resolutions are important from the Boij-S\"{o}derberg theory's point of view, in which pure resolutions can be used
as building blocks in order to obtain any Betti diagram. The following theorem which characterizes all graphs whose binomial edge ideals have pure resolutions, is the main theorem of this section.

\begin{thm}\label{pure}
Let $G$ be a graph without any isolated vertices. Then $J_G$ has a pure resolution if and only if $G$ is a \\
{\em{(a)}} complete graph, or \\
{\em{(b)}} complete bipartite graph, or \\
{\em{(c)}} disjoint union of some paths.
\end{thm}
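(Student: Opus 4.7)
The theorem being an equivalence, I would argue the two directions separately. For sufficiency, in case (a) Theorem~\ref{linear} gives that $J_{K_n}$ has a linear, hence pure, resolution; in case (b) Schenzel and Zafar constructed an explicit pure resolution of $J_{K_{m,n}}$, which I would cite directly; and in case (c), if $G$ is a disjoint union of $c$ paths on $n$ vertices, then $J_G$ is minimally generated by $n-c$ quadrics, and working component by component (on each path the claim is standard) shows that these generators form a regular sequence of length $n - c = \height(J_G)$. Hence $J_G$ is a complete intersection, the Koszul complex is its minimal free resolution, and it is pure of type $(0, 2, 4, \ldots, 2(n-c))$.

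For the necessity direction I would first reduce to the connected case. Suppose $J_G$ has a pure resolution and $G = G_1 \sqcup G_2$ with each $G_i$ containing an edge. Since the variables of $J_{G_1}$ and $J_{G_2}$ are disjoint, $S/J_G \cong (S_1/J_{G_1}) \otimes_K (S_2/J_{G_2})$ and the minimal free resolution of $S/J_G$ is the tensor product of the minimal resolutions of the two factors. Writing out the Betti-number convolution formula and enforcing purity at each homological step recursively forces $d_i = 2i$ and, in particular, that each factor itself has the pure Koszul-type Betti diagram of a complete intersection of $|E(G_i)|$ quadrics. The Herzog--K\"uhl equations then pin down the Betti numbers exactly, yielding $\mu(J_{G_i}) = \height(J_{G_i}) = |E(G_i)|$, so each $J_{G_i}$ is a complete intersection of quadrics. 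By the known characterization of binomial edge ideals that are complete intersections, each component $G_i$ is a path, placing $G$ in case (c).

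Now assume $G$ is connected and $J_G$ has pure resolution of type $(0, 2, d_2, \ldots, d_p)$. If $d_i = i+1$ throughout, the resolution is linear and $G = K_n$ by Theorem~\ref{linear}. Otherwise I would split by $d_2$: when $d_2 \geq 4$ there are no linear first syzygies ($\beta_{1,3}(J_G) = 0$), whereas when $d_2 = 3$ some linear first syzygies exist but the linear strand eventually truncates. In either regime the strategy is to combine the Herzog--K\"uhl equations (which fix the Betti numbers of a pure resolution up to a common scalar) with combinatorial lower bounds on $\beta_{i,j}(J_G)$ coming from induced-subgraph data of $G$ (counts of triangles, cherries, induced $4$-cycles, and so on) to rule out every connected graph outside the allowed list. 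Concretely, a case analysis of candidate obstructions --- odd cycles $C_{2k+1}$ with $k \geq 2$, $P_4$ with an extra chord, $K_4$ minus an edge, the bowtie, and so on --- would show that any such $G$ produces two nonzero Betti numbers at the same homological degree but in distinct total degrees, violating purity.

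This connected step is where the technical core lies, and I expect the main obstacle to be precisely the verification that no ``exotic'' pure resolution occurs for connected graphs outside $\{K_n, K_{m,n}, P_n\}$: one either distills a single uniform structural invariant distinguishing these three families from every other connected graph, or identifies the full finite list of minimal obstructions and performs the corresponding Betti computations. The latter is doable but tedious, and packaging it into a clean argument --- rather than the more routine disconnected reduction or the sufficiency side --- is the principal hurdle.
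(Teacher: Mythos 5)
Your sufficiency direction is essentially the paper's: $K_n$ via Theorem~\ref{linear}, $K_{m,n-m}$ via Schenzel--Zafar, and paths via the regular-sequence/Koszul argument (the paper gets the regular sequence from the initial ideal of the closed graph $P_n$, as in Remark~\ref{special graphs}, and takes the tensor product over components). Your disconnected reduction is also close in spirit to the paper's Lemma~\ref{lemma} (purity of a tensor product forces each factor to have shifts $d_i=id_1$), but your next step --- invoking the Herzog--K\"uhl equations to conclude that each factor is a complete intersection of quadrics --- is not justified: the Herzog--K\"uhl formulas pin down the Betti numbers of a pure resolution only under a Cohen--Macaulay hypothesis, which you have not established for $S/J_{G_i}$. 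The paper avoids this entirely by first classifying the connected components (each must be a path, a complete graph, or a complete bipartite graph) and then observing that among these only paths have shifts of the form $2i$, so Lemma~\ref{lemma} excludes everything else.

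The more serious issue is the connected case, which you yourself flag as the technical core and then leave as a strategy rather than a proof. Your proposed obstruction list (odd cycles, $K_4$ minus an edge, the bowtie, etc.) is not the one that works, and "combine Herzog--K\"uhl with combinatorial lower bounds" is not carried out. The paper's argument is concrete and short: for a connected non-complete graph, $\beta_{1,4}(J_G)\neq 0$ by \cite[Theorem~2.2]{SK}, so purity forces $\beta_{1,3}(J_G)=0$, i.e.\ $G$ is triangle-free; induced cycles $C_m$ with $m\geq 5$ are excluded by Proposition~\ref{induced} together with \cite[Corollary~3.8]{ZZ}; hence $G$ is bipartite. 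The decisive obstructions are then two specific five-vertex graphs --- a tree containing both $P_4$ and $K_{1,3}$ as induced subgraphs (Figure~\ref{G_2}) and $C_4$ with a pendant vertex (Figure~\ref{square}) --- each of which has $\beta_{2,5}\neq 0$ and $\beta_{2,6}\neq 0$ simultaneously (from $\beta_{2,5}(J_{K_{1,3}})\neq 0$ and $\beta_{2,6}(J_{P_4})\neq 0$ via Proposition~\ref{induced}), killing purity. A tree avoiding the first obstruction is $P_n$ or $K_{1,n-1}$, and an induction on $n$ using the second obstruction shows that a connected bipartite graph with an induced $C_4$ and no induced copy of that obstruction is complete bipartite. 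Without an argument of this kind (or a genuine substitute), your proposal establishes only the easy direction and a reduction, not the theorem.
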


To prove the above theorem, we need some facts which are mentioned in the following. Let $G$ and $H$ be two graphs. Then we say that $G$ is $H$-free, if it does not have any induced subgraphs isomorphic to the graph $H$. Here, by $C_n$, $K_n$, $P_n$ and $K_{m,n-m}$, for some $m$ with $1\leq m \leq n$, we mean the cycle, complete graph, path and complete bipartite graph, on $n$ vertices, respectively. Also, by $G\setminus v$, we mean the induced subgraph of a graph $G$ on $[n]\setminus v$.

\begin{prop}\label{induced}
\cite[Proposition~8]{SK1} Let $G$ be a graph and $H$ an induced subgraph of $G$. Then we have $\beta_{i,j}(J_{H})\leq \beta_{i,j}(J_{G})$, for all $i,j$. In particular, if $J_G$ has a pure resolution, then $J_H$ does too.
\end{prop}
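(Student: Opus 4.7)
The plan is to prove the inequality via a multigraded restriction argument. By induction on $|V(G)\setminus V(H)|$, it suffices to treat the case $H=G\setminus\{v\}$ for a single vertex $v$, since the induction step just iterates vertex deletion. Put $W=V(H)=[n]\setminus\{v\}$ and let $S_W=K[x_i,y_i : i\in W]\subseteq S$.

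The key tool will be the $\mathbb{Z}^n$-multigrading on $S$ in which $\deg x_i=\deg y_i=e_i$. Each generator $f_{ij}=x_iy_j-x_jy_i$ is then homogeneous of multidegree $e_i+e_j$, so $J_G$ is $\mathbb{Z}^n$-graded, and the minimal free resolution of $S/J_G$ (equivalently, of $J_G$) refines to a multigraded resolution giving multigraded Betti numbers $\beta_{i,\alpha}(J_G)$ with $\beta_{i,j}(J_G)=\sum_{|\alpha|=j}\beta_{i,\alpha}(J_G)$. I will first verify the set-theoretic fact $J_G\cap S_W=J_H$: the retract $\pi\colon S\to S_W$ sending $x_v,y_v\mapsto 0$ satisfies $\pi(f_{ij})=f_{ij}$ if $\{i,j\}\in E(H)$ and $\pi(f_{ij})=0$ otherwise, so $\pi(J_G)=J_H$, and any $g\in J_G\cap S_W$ satisfies $g=\pi(g)\in J_H$.

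The core of the proof is then the claim that, for every multidegree $\alpha\in\mathbb{N}^n$ with $\alpha_v=0$,
\[
\beta_{i,\alpha}^{S}(J_G)=\beta_{i,\alpha}^{S_W}(J_H).
\]
To prove it, I take a minimal $\mathbb{Z}^n$-graded free resolution $F_\bullet$ of $S/J_G$ and extract its $\mathbb{Z}^W$-multigraded part, i.e.\ the direct sum of pieces in multidegrees $\alpha$ with $\alpha_v=0$. Any summand $S(-\beta)$ with $\beta_v>0$ contributes $0$ in all such multidegrees (it would require a negative power of $x_v$ or $y_v$), while summands $S(-\beta)$ with $\beta_v=0$ contribute exactly $S_W(-\beta)$ on the $W$-part. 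Because the differentials of $F_\bullet$ are multigraded, this restriction is a subcomplex of free $S_W$-modules; minimality is inherited from $F_\bullet$; and the $W$-part of $H_0(F_\bullet)=S/J_G$ is $(S/J_G)\cap S_W=S_W/J_H$ by the retract computation above. So the restricted complex is the minimal $S_W$-free resolution of $S_W/J_H$, yielding the multigraded equality.

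Summing over multidegrees of fixed total degree $j$ and noting that $J_H$ has no multigraded piece with $v$-coordinate nonzero gives
\[
\beta_{i,j}(J_H)=\sum_{|\alpha|=j,\ \alpha_v=0}\beta_{i,\alpha}(J_H)=\sum_{|\alpha|=j,\ \alpha_v=0}\beta_{i,\alpha}(J_G)\leq\sum_{|\alpha|=j}\beta_{i,\alpha}(J_G)=\beta_{i,j}(J_G),
\]
which is the desired inequality. For the ``in particular'' assertion, if $J_G$ has a pure resolution, then in each homological degree $i$ there is at most one $j$ with $\beta_{i,j}(J_G)\neq 0$; the inequality forces the same for $J_H$, so $J_H$ also has a pure resolution (of possibly shorter length). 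The main obstacle I anticipate is the rigorous verification of the multigraded restriction step, in particular showing that the $\mathbb{Z}^W$-restriction is genuinely a free $S_W$-complex and that it is minimal and resolves $S_W/J_H$; once that is in place, everything else is a straightforward summation.
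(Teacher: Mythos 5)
Your proof is correct, and it is worth noting that the paper itself contains no proof of this statement: it is quoted from \cite[Proposition~8]{SK1}, and the argument given in that cited source (see also the analogous induced-subgraph lemma of Matsuda and Murai \cite{MM}) is essentially the one you give — reduction to deleting a single vertex, the $\mathbb{Z}^n$-grading with $\deg x_i=\deg y_i=e_i$, the identity $J_G\cap S_W=J_H$, and restriction of the minimal multigraded free resolution to the multidegrees $\alpha$ with $\alpha_v=0$. The step you flag as the main obstacle is indeed routine: extracting the multihomogeneous components in degrees with $\alpha_v=0$ is an exact functor on $\mathbb{Z}^n$-graded modules, so the restricted complex is exact; your computation correctly shows each summand $S(-\beta)$ restricts to $S_W(-\beta)$ if $\beta_v=0$ and to $0$ if $\beta_v>0$; and minimality descends because the surviving entries of the differentials are multigraded of degrees with $v$-th coordinate zero, hence lie in $\mathfrak{m}\cap S_W=\mathfrak{m}_W$. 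With that in place, the multigraded equality $\beta_{i,\alpha}(J_G)=\beta_{i,\alpha}(J_H)$ for $\alpha_v=0$, the summation over $|\alpha|=j$, and the purity conclusion (at most one nonzero $j$ in each homological degree is inherited from $J_G$) all hold exactly as you state.
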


\begin{rem}\label{special graphs}
{\em Suppose that we label the vertices of $P_n$ such that its edges are of the form $\{v_i,v_{i+1}\}$, for all $1\leq i\leq n-1$. Then, since $P_n$ is a closed graph, we have $\mathrm{in}_{<}(J_{P_n})$ is minimally generated by $\{x_iy_{i+1}:1\leq i\leq n-1\}$, which is a regular sequence of monomials. It follows that the generators of $J_{P_n}$ form a regular sequence as well. Therefore, the Koszul complex resolves $S/J_{P_n}$. Consequently, $S/J_{P_n}$ has a pure resolution, and $\beta_{i,j}(S/J_{P_n})\neq 0$, if $j=2i$, and $\beta_{i,j}(S/J_{P_n})=0$, otherwise. In particular, we have $\beta_{2,6}(J_{P_n})\neq 0$. Moreover, note that, by \cite[Theorem~5.4]{SZ}, $\beta_{2,5}(J_{K_{1,3}})\neq 0$. Thus, by Proposition~\ref{induced}, if $G$ is either the graph depicted in Figure~\ref{G_2} or one in Figure~\ref{square}, then its binomial edge ideal does not have a pure resolution, since both graphs have $P_4$ and $K_{1,3}$ as induced subgraphs and hence $\beta_{2,5}(J_{G})\neq 0$ and $\beta_{2,6}(J_{G})\neq 0$, by Proposition~\ref{induced}. }
\end{rem}

\begin{center}
\begin{figure}
%\vspace{1 cm}
\hspace{0 cm}
\includegraphics[height=1.5cm,width=3.1cm]{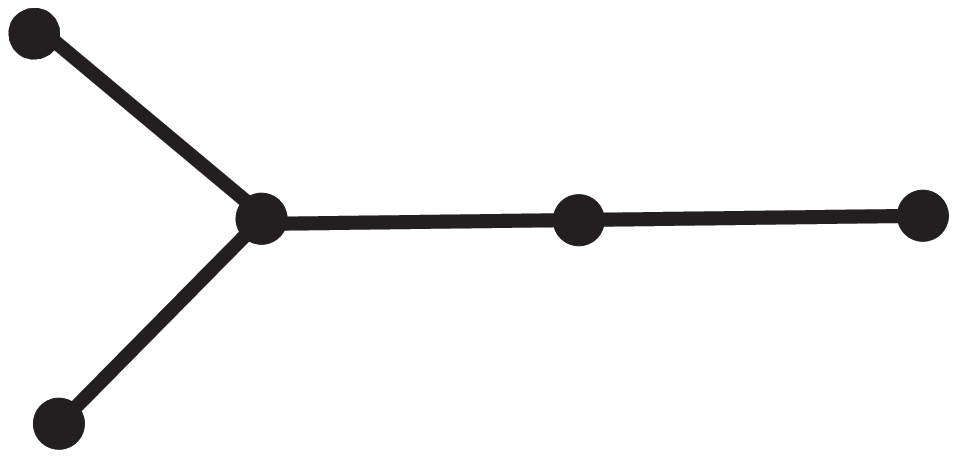}
\caption{\footnotesize{}}\hspace{2 cm}
\label{G_2}
\end{figure}
\end{center}

\begin{center}
\begin{figure}
%\vspace{1 cm}
\hspace{0 cm}
\includegraphics[height=1.85cm,width=1.8cm]{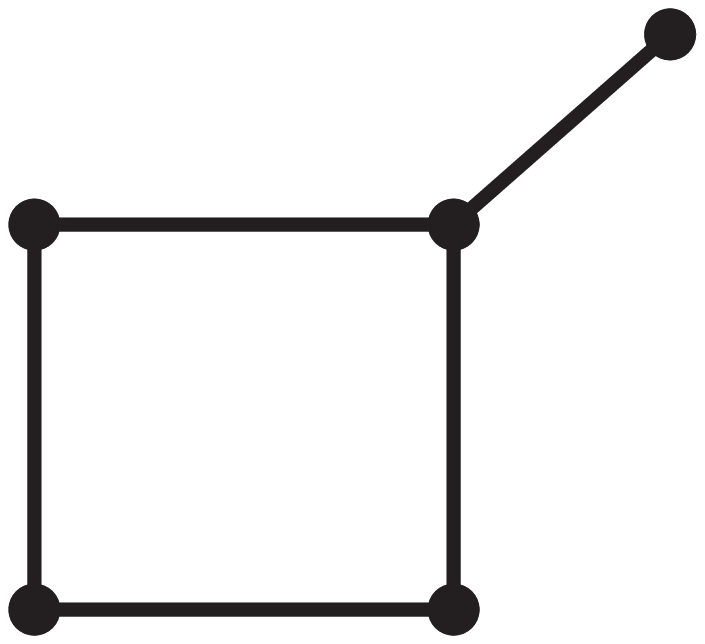}
\caption{\footnotesize{}}\hspace{2 cm}
\label{square}
\end{figure}
\end{center}

We need the following easy lemma to prove the main theorem.

\begin{lem}\label{lemma}
Let $I\subset R=K[x_1,\ldots,x_n]$ and $J\subset T=K[x_{n+1},\ldots,x_m]$ be two graded ideals with pure resolutions, say
$$0\rightarrow R(-d_p)^{\beta_p}\rightarrow \cdots \rightarrow R(-d_1)^{\beta_1}\rightarrow R \rightarrow R/I\rightarrow 0,$$
$$0\rightarrow T(-e_q)^{\gamma_q}\rightarrow \cdots \rightarrow T(-e_1)^{\beta_1}\rightarrow T \rightarrow T/J\rightarrow 0,$$
and let $S=K[x_1,\ldots,x_m]$. Then $IS+JS$ has a pure resolution if and only if $e_1=d_1$, $d_i=id_1$, and $e_j=je_1$, for all $i,j$.
\end{lem}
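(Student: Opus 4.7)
The plan is to reduce the question to a direct computation of shifts in a tensor product of resolutions. The key observation is that since $I$ lives in the variables $x_1,\ldots,x_n$ and $J$ in the disjoint variables $x_{n+1},\ldots,x_m$, there is a natural isomorphism $S/(IS+JS)\cong (R/I)\otimes_K(T/J)$. Letting $F_\bullet$ and $G_\bullet$ be the given minimal graded free resolutions of $R/I$ and $T/J$, I would base-change to $S$ (which is flat over both $R$ and $T$) and form the tensor complex $(F_\bullet\otimes_R S)\otimes_S (G_\bullet\otimes_T S)$. Because the differentials of $F_\bullet$ have entries in $\mathfrak m_R\subset\mathfrak m_S$ and those of $G_\bullet$ in $\mathfrak m_T\subset\mathfrak m_S$, the tensor complex is minimal; it is exact by the Künneth formula since the two factors are resolutions in independent variables. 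Hence it is the minimal graded free resolution of $S/(IS+JS)$.

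In homological degree $k\ge 1$, this complex has free summand
$$\bigoplus_{i+j=k}\ S(-d_i)^{\beta_i}\otimes_K S(-e_j)^{\gamma_j}\ =\ \bigoplus_{i+j=k}S(-d_i-e_j)^{\beta_i\gamma_j},$$
with the conventions $d_0=e_0=0$ and $\beta_0=\gamma_0=1$, where $i$ ranges over $0,\ldots,p$ and $j$ over $0,\ldots,q$. So the set of twists appearing at homological position $k$ of the minimal resolution of $S/(IS+JS)$ is precisely $\{d_i+e_j : i+j=k\}$. Since the minimal resolution of $IS+JS$ is the shift of the tail of the resolution of $S/(IS+JS)$, the ideal $IS+JS$ has a pure resolution if and only if for every $k\ge 1$ this set of twists is a singleton.

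Reading off that condition degree by degree finishes the argument. At $k=1$ the set is $\{d_1,e_1\}$, forcing $d_1=e_1$. Assuming inductively that $d_i=id_1$ for $1\le i<k$ and $e_j=jd_1$ for $1\le j<k$, every intermediate term $d_i+e_{k-i}$ with $0<i<k$ equals $kd_1$, so the set of shifts at degree $k$ collapses to $\{d_k,\,kd_1,\,e_k\}$; equality of these three values yields $d_k=e_k=kd_1$. The converse is immediate, since under the stated equalities every shift $d_i+e_j$ with $i+j=k$ equals $kd_1$, so the resolution is pure with shift sequence $(kd_1)_{k\ge 1}$. The only nontrivial ingredient in the whole argument is the minimality and exactness of the tensor-product resolution over the disjoint variable sets; once that is in hand, the equivalence is a purely combinatorial bookkeeping that I would expect to present as a short induction.
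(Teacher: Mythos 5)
Your proposal is correct and follows essentially the same route as the paper: both rest on the observation that the minimal graded free resolution of $S/(IS+JS)$ is the tensor product of the resolutions of $R/I$ and $T/J$ over the disjoint variable sets, yielding the convolution formula for the shifts, after which the equivalence is bookkeeping. You simply spell out the degree-by-degree induction that the paper leaves implicit.
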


\begin{proof}
It is enough to note that the minimal graded free resolution of $S/(IS+JS)$ is the tensor product of those of $R/I$ and $T/J$. Thus, we have
$$\beta_{i,j}(S/\big{(}IS+JS)\big{)}=\sum_{t+t'=i,k+k'=j}\beta_{t,k}(R/I)\beta_{t',k'}(T/J),$$ for all $i,j$.
\end{proof}

{\em Proof of Theorem~\ref{pure}.} By Theorem~\ref{linear}, $J_{K_n}$ has a linear resolution, and hence a pure resolution. By Remark~\ref{special graphs}, $J_{P_n}$ has also a pure resolution. So that, if $G$ is the disjoint union of some paths, say $P_{t_1},\ldots,P_{t_l}$, then the minimal graded free resolution of $S/J_G$ is the tensor product of the minimal graded free resolutions of $S/J_{P_{t_j}}$'s, and hence it is clearly pure. If $G$ is a complete bipartite graph, then by \cite[Theorem~5.3]{SZ}, $J_{G}$ has a pure resolution. Conversely, suppose that $J_G$ has a pure resolution. If it also has a linear resolution, then by Theorem~\ref{linear}, $G$ is complete. So, suppose that $J_G$ has a pure and non-linear resolution. Thus, $G$ is a non-complete graph. First suppose that $G$ is a connected graph on $[n]$. So that $\beta_{1,4}(J_G)\neq 0$, by \cite[Theorem~2.2]{SK}. Hence, $\beta_{1,3}(J_G)=0$, since $J_G$ has a pure resolution. Thus, $G$ is a $C_3$-free graph, by \cite[Theorem~2.2]{SK}. If $G$ has an induced graph isomorphic to $C_m$, for $m\geq 5$, then $J_G$ does not have any pure resolutions, by Proposition~\ref{induced} and \cite[Corollary~3.8]{ZZ}. Thus, we have that $G$ is $C_m$-free, for all $m\neq 4$. In particular, $G$ does not contain any induced odd cycles and hence any odd cycles, which yields that $G$ is a bipartite graph. Let $A$ and $B$ be the bipartition of the vertex set of $G$. If $G$ is a tree, then it is $P_n$, or $K_{1,n-1}$, or has an induced subgraph isomorphic the graph shown in Figure~\ref{G_2}. But, in the latter case, $J_G$ does not have any pure resolution, by Remark~\ref{special graphs}. Now, suppose that $G$ has at least one induced cycle which has to be of length $4$. So, we have $|A|,|B|\geq 2$. We show that $G$ is complete bipartite. For this purpose, we use the induction on $n$, the number of vertices. If $n=4$, then $G$ is just $C_4$, which is a complete bipartite graph. Now, suppose that $n\geq 5$ and $|A|\leq |B|$. Let $C$ be an induced cycle of $G$ on the vertices $\{v,w,z,t\}$, where $v,w\in A$ and $z,t\in B$. Note that $G\setminus v$ is a bipartite graph for which $J_{G\setminus v}$ has a pure resolution, by Proposition~\ref{induced}, since $G\setminus v$ is an induced subgraph of $G$. Let $H$ be the graph depicted in Figure~\ref{square}. Note that, $G\setminus v$ is connected, because by replacing $v$ by $w$ in each path between two vertices $x$ and $y$ of $G$ which contains $v$, one obtains a path between $x$ and $y$ in $G\setminus v$, as otherwise, $G$ is not $H$-free, a contradiction by Remark~\ref{special graphs}. If $G\setminus v$ is a tree, then $G\setminus v$ is $P_{n-1}$ or $K_{1,n-2}$. If $G\setminus v$ is $P_{n-1}$, then, clearly, $G$ has an induced subgraph isomorphic to $H$, containing $C$, which is a contradiction, by Remark~\ref{special graphs}. If $G\setminus v$ is $K_{1,n-2}$, then $|A|=2$. So that, if $G$ is not complete bipartite, in this case, then there is a vertex $x$ in $B$ which is not adjacent to $v$. Therefore, the induced subgraph of $G$ on $\{v,w,z,t,x\}$ is isomorphic to $H$, a contradiction. Now, assume that $G\setminus v$ is not a tree and contains an induced cycle of length $4$. Thus, by the induction hypothesis, we have that $G\setminus v$ is a complete bipartite graph. So that $G$ is also complete bipartite, because otherwise, there is a vertex $u\in B$ which is not adjacent to $v$ and consequently, the induced subgraph on $\{v,w,z,t,u\}$ is isomorphic to $H$, a contradiction, since $J_G$ has a pure resolution. Now, assume that $G$ is a disconnected graph with connected components $G_1,\ldots,G_c$, where $c\geq 2$. By Proposition~\ref{induced}, $J_{G_i}$ has a pure resolution, for all $i=1,\ldots,c$. Hence, $G_i$ is a path, or a complete graph, or a complete bipartite graph, for all $i=1,\ldots,c$, by the above discussion. On the other hand, $J_{G_i\cup G_j}$ has also a pure resolution, for all $1\leq i<j\leq c$, again by Proposition~\ref{induced}. Now, it suffices to apply Lemma~\ref{lemma} for $J_{G_i}$ and $J_{G_j}$. Therefore, all connected components of $G$ are paths, by Theorem~\ref{linear}, Remark~\ref{special graphs}, and \cite[Theorem~5.3]{SZ}. $\Box$

\section{"Free cut edge" switching and the linear strand of binomial edge ideals }\label{Free cut edge switching and linear strand of binomial edge ideals}

\noindent In this section, we introduce a new switching for graphs and study some algebraic invariants of binomial edge ideals through this switching. Also, we study the linear strand of binomial edge ideals via graphical terms.

Let $G$ be a graph on $[n]$ and $v$ a vertex of it. If $e=\{v,w\}$ is an edge of $G$, then two vertices $v$ and $w$ are called the endpoints of $e$. If $\{e_1,\ldots,e_t\}$ is a set of edges of $G$, then by $G\setminus \{e_1,\ldots,e_t\}$, we mean the graph on the same vertex set as $G$ in which the edges $e_1,\ldots,e_t$ are omitted. To simplify our notation, we write $G\setminus e$, instead of $G\setminus \{e\}$.
An edge $e$ of $G$ whose deletion from the graph, yields a graph with more connected components than $G$, is called a \textbf{cut edge} of $G$.

Let $\Delta(G)$ be the clique complex of $G$, the simplicial complex whose facets are the vertex sets of the maximal cliques of $G$. We say that a vertex of $G$ is a \textbf{free vertex}, if it is a free vertex of $\Delta(G)$, i.e. it is contained in only one facet of $\Delta(G)$. Let $G$ be a graph and $e$ a cut edge of $G$ such that its endpoints are the free vertices of the graph $G\setminus e$. Then, we call $e$, a \textbf{free cut edge} of $G$. Suppose that $\{e_1,\ldots,e_t\}$ is the set of all free cut edges of $G$. Then, we call the graph $G\setminus  \{e_1,\ldots,e_t\}$, the \textbf{reduced graph} of $G$, and denote it by $\mathcal{R}(G)$ (see \cite[Definition~3.3]{KS}). Set $\mathcal{R}(G):=G$, if $G$ does not have any free cut edges.

As in \cite{MSh}, if $v,w$ are two vertices of a graph $G=(V,E)$ and $e=\{v,w\}$ is not an edge of $G$, then $G_e$ is defined to be the graph on the vertex set $V$, and the edge set $E \cup \{\{x,y\}~:~x,y\in N_G(v)~\mathrm{or}~x,y\in N_G(w)\}$. The set of all neighbors (i.e. adjacent vertices) of the vertex $v$ in $G$, is denoted by $N_G(v)$. By $f_e$, we mean the binomial $f_{ij}=x_iy_j-x_jy_i$, in which $e=\{i,j\}$ is an edge of $G$. Now, we recall the following propositions from \cite{KS}.

\begin{prop}\label{colon1}
\cite[Proposition~3.7]{KS} Let $G$ be a graph and $e$ be a cut edge of $G$. Then \\
{\em{(a)}} $\beta_{i,j}(J_G)\leq \beta_{i,j}(J_{G\setminus e})+\beta_{i-1,j-2}(J_{{(G\setminus e)}_e})$, for all $i,j\geq 1$, \\
{\em{(b)}} $\mathrm{pd}(J_G)\leq \mathrm{max}\{\mathrm{pd}(J_{G\setminus e}),\mathrm{pd}(J_{{(G\setminus e)}_e})+1\}$, \\
{\em{(c)}} $\mathrm{reg}(J_G)\leq \mathrm{max}\{\mathrm{reg}(J_{G\setminus e}),\mathrm{reg}(J_{{(G\setminus e)}_e})+1\}$.
\end{prop}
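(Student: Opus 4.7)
The plan is to obtain all three statements as consequences of a single short exact sequence coming from the decomposition $J_G = J_{G \setminus e} + (f_e)$, together with an identification of the relevant colon ideal. Writing $f_e = x_v y_w - x_w y_v$ for $e = \{v,w\}$, multiplication by $f_e$ gives the short exact sequence
$$0 \to \bigl(S/(J_{G \setminus e} : f_e)\bigr)(-2) \xrightarrow{\ \cdot f_e\ } S/J_{G \setminus e} \to S/J_G \to 0,$$
shifted by $2$ since $f_e$ has degree $2$.

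The key step, and the main obstacle, is to prove the algebraic identity
$$J_{G \setminus e} : f_e \;=\; J_{(G \setminus e)_e}.$$
The inclusion $\supseteq$ is the standard observation that any binomial $f_{xy}$ with $x,y \in N_{G\setminus e}(v)$ (or in $N_{G\setminus e}(w)$) multiplied by $f_e$ lies in $J_{G \setminus e}$, via the Plücker-type relation $f_{xy}\,f_{vw} = f_{xv}\,f_{yw} - f_{xw}\,f_{yv} + (\text{a correction in } J_{G\setminus e})$; here one uses that each of the auxiliary edges $\{x,v\}, \{y,v\}$ (resp.\ with $w$) is in $G \setminus e$ by the definition of the neighborhood. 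The nontrivial direction $\subseteq$ is where the \emph{cut edge} hypothesis is essential: removing $e$ separates $v$ from $w$, so $N_{G \setminus e}(v)$ and $N_{G \setminus e}(w)$ lie in different connected components of $G \setminus e$. This disjointness lets one show, using a Gröbner basis or direct computation on the generators of $(J_{G \setminus e} : f_e)$, that no "mixed" generator can appear and the colon ideal is exactly generated by the new 2-minors of $(G \setminus e)_e$.

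Granting this identification, part (a) follows immediately from the long exact sequence of $\mathrm{Tor}_\bullet(-, K)$ applied to the short exact sequence above. The relevant portion is
$$\mathrm{Tor}_i(S/J_{G\setminus e}, K)_j \to \mathrm{Tor}_i(S/J_G, K)_j \to \mathrm{Tor}_{i-1}\bigl(S/J_{(G\setminus e)_e}, K\bigr)_{j-2},$$
which yields $\beta_{i,j}(S/J_G) \le \beta_{i,j}(S/J_{G \setminus e}) + \beta_{i-1,j-2}(S/J_{(G \setminus e)_e})$; translating $S/J$-Betti numbers to $J$-Betti numbers via $\beta_{i,j}(J) = \beta_{i+1,j}(S/J)$ gives the stated inequality.

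Finally, parts (b) and (c) are straightforward formal consequences of (a). For the projective dimension, if $\beta_{i,j}(J_G) \ne 0$ for some $j$, then (a) forces either $\beta_{i,j}(J_{G \setminus e}) \ne 0$, giving $\mathrm{pd}(J_G) \le \mathrm{pd}(J_{G \setminus e})$ from this contribution, or $\beta_{i-1,j-2}(J_{(G \setminus e)_e}) \ne 0$, giving $i \le \mathrm{pd}(J_{(G \setminus e)_e}) + 1$; taking the maximum over $i$ yields (b). For the regularity, the same dichotomy applied to the slope $j-i$ gives either $j - i \le \mathrm{reg}(J_{G \setminus e})$ or $j - i = (j-2) - (i-1) + 1 \le \mathrm{reg}(J_{(G \setminus e)_e}) + 1$, yielding (c). Thus the whole proposition reduces to the colon computation, which is the conceptual heart of the argument.
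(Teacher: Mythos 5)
The paper does not actually prove this proposition; it is quoted verbatim from [KS, Proposition~3.7], and your reconstruction follows exactly the argument used in that source: the short exact sequence $0 \to \bigl(S/(J_{G\setminus e}:f_e)\bigr)(-2) \to S/J_{G\setminus e} \to S/J_G \to 0$ combined with the identification $J_{G\setminus e}:f_e = J_{(G\setminus e)_e}$, which holds because the general colon formula of Mohammadi--Sharifan gives $J_H : f_e = J_{H_e} + I_e$ with $I_e$ generated by monomials attached to admissible paths from $v$ to $w$ in $H$, and the cut-edge hypothesis forces $I_e = 0$ since $v$ and $w$ lie in different components of $G\setminus e$. The only place your sketch is thinner than the source is the inclusion $J_{G\setminus e}:f_e \subseteq J_{(G\setminus e)_e}$, which is not a routine Gr\"obner check but precisely the content of the Mohammadi--Sharifan computation; the rest (the Pl\"ucker relation for $\supseteq$, and the deduction of (a), (b), (c) from the long exact sequence of $\Tor$) is correct as written.
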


\begin{prop}\label{colon2}
\cite[Proposition~3.9]{KS} Let $G$ be a graph and $e$ be a free cut edge of $G$. Then we have \\
{\em{(a)}} $\beta_{i,j}(J_G)=\beta_{i,j}(J_{G\setminus e})+\beta_{i-1,j-2}(J_{G\setminus e})$, for all $i,j\geq 1$, \\
{\em{(b)}} $\mathrm{pd}(J_G)=\mathrm{pd}(J_{G\setminus e})+1$, \\
{\em{(c)}} $\mathrm{reg}(J_G)=\mathrm{reg}(J_{G\setminus e})+1$.
\end{prop}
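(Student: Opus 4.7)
The plan is to prove (a) by analyzing the short exact sequence
$$0 \to S/(J_{G\setminus e}:f_e)(-2) \xrightarrow{\cdot f_e} S/J_{G\setminus e} \to S/J_G \to 0$$
arising from $J_G = J_{G\setminus e} + (f_e)$, and then to deduce (b) and (c) as formal consequences.

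The crux, and the step I expect to be the main obstacle, is to identify the colon ideal $J_{G\setminus e}:f_e$ with $J_{G\setminus e}$ itself. I would invoke the standard colon formula for a non-edge $e$ of a graph $H$ (essentially due to \cite{MSh} and already implicit in the proof of Proposition~\ref{colon1}), namely $J_H:f_e = J_{H_e}$, applied with $H = G\setminus e$. This reduces the question to checking that $(G\setminus e)_e = G\setminus e$ when $e = \{v,w\}$ is a free cut edge, which is the entire combinatorial content of the freeness hypothesis: since $v$ is a free vertex of $G\setminus e$, it lies in a unique facet of $\Delta(G\setminus e)$, and hence $N_{G\setminus e}(v)$ is already a clique; the analogous statement holds for $w$. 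Adding edges among neighbors of $v$ or $w$ therefore creates no new edges, so $(G\setminus e)_e = G\setminus e$.

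With $J_{G\setminus e}:f_e = J_{G\setminus e}$ in hand, the short exact sequence becomes
$$0 \to S/J_{G\setminus e}(-2) \xrightarrow{\cdot f_e} S/J_{G\setminus e} \to S/J_G \to 0.$$
Applying $\Tor_\bullet^S(-, K)$, the map $\Tor_i^S(S/J_{G\setminus e}, K)(-2) \to \Tor_i^S(S/J_{G\setminus e}, K)$ induced by multiplication by $f_e$ is zero, because the target is a $K$-vector space annihilated by $\mm \ni f_e$. Consequently the long exact sequence in $\Tor$ breaks into short exact sequences
$$0 \to \Tor_i^S(S/J_{G\setminus e}, K)_j \to \Tor_i^S(S/J_G, K)_j \to \Tor_{i-1}^S(S/J_{G\setminus e}, K)_{j-2} \to 0,$$
and the usual reindexing $\beta_{i,j}(J) = \beta_{i+1,j}(S/J)$ delivers (a).

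Parts (b) and (c) then follow directly from (a). For (b), $\mathrm{pd}(J_G)$ is the largest $i$ for which some $\beta_{i,j}(J_G)\neq 0$; by (a) this is $\max\{\mathrm{pd}(J_{G\setminus e}),\, \mathrm{pd}(J_{G\setminus e})+1\} = \mathrm{pd}(J_{G\setminus e})+1$. For (c), the second summand in (a) contributes a nonzero $\beta_{i,j}(J_G)$ exactly when $\beta_{i-1,j-2}(J_{G\setminus e})\neq 0$, and since the substitution $(i,j)\mapsto(i-1,j-2)$ shifts $j-i$ by $+1$, the maximum of $j-i$ over the support of $\beta_{\bullet,\bullet}(J_G)$ is precisely $\mathrm{reg}(J_{G\setminus e})+1$.
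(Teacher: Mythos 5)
Your overall architecture is sound, and it is essentially the argument behind the cited result: the paper itself gives no proof of this proposition, deferring entirely to \cite[Proposition~3.9]{KS}, and the proof there runs exactly through the colon-ideal identification for a cut edge followed by a (minimal) mapping cone, which is equivalent to your splitting of the long exact $\Tor$ sequence. Your observations that $(G\setminus e)_e=G\setminus e$ because each endpoint of $e$ is a free vertex of $G\setminus e$ (so its neighbourhood is already a clique), that the induced maps on $\Tor$ vanish since $f_e\in\mathfrak{m}$ annihilates $\Tor_i^S(S/J_{G\setminus e},K)$, and the deduction of (b) and (c) from (a) are all correct.

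One step needs tightening: the formula $J_H:f_e=J_{H_e}$ is \emph{not} valid for an arbitrary non-edge $e=\{v,w\}$ of $H$. The result of \cite{MSh} is $J_H:f_e=J_{H_e}+I_e$, where $I_e$ is generated by monomials attached to the admissible paths from $v$ to $w$ in $H$; already for $H=P_3$ with edges $\{1,2\},\{2,3\}$ and $e=\{1,3\}$ one has $H_e=H$ but $y_2f_{13}=y_1f_{23}+y_3f_{12}\in J_H$, so $y_2\in (J_H:f_e)\setminus J_{H_e}$. The clean identity you invoke holds in your situation only because $e$ is a cut edge of $G$, so $v$ and $w$ lie in different connected components of $H=G\setminus e$ and there are no paths between them, forcing $I_e=0$. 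This is precisely where the cut-edge half of the hypothesis enters (the freeness half gives $H_e=H$), and it should be stated explicitly; with that correction the proof is complete and matches the source of Proposition~\ref{colon1} and \cite[Proposition~3.9]{KS}.
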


Motivated by Proposition~\ref{colon2}, we introduce a new method of switching of graphs which is based on the existence of some free cut edges and free vertices in a graph. Let $v,w$ be two vertices of $G$. Then, by $G\cup e$, we mean the graph obtained from $G$, by adding the edge $e=\{v,w\}$ to $G$, where $e$ is not an edge of $G$.

\begin{defn}\label{switching}
{\em Let $G$ be a graph and $e$ be a free cut edge of $G$. Then, we say that $G':=(G\setminus e)\cup e'$ is obtained from $G$, by \textbf{"free cut edge" switching}, if $e'$ is a free cut edge of $G'$. }
\end{defn}

Note that $G$ and $G'$ are on the same vertex set, and also $e$ and $e'$ could have a vertex in common. This switching is possible if and only if $G$ has at least a free cut edge and at least a free vertex. For example, one could do it on a forest and obtain again a forest. \\

Now, let us recall a criterion for checking the closedness of a graph due to Cox and Erskine in \cite{CE}. They call a connected graph $G$, \textit{narrow}, if every vertex is distance at most one from every longest shortest path. Here, the distance $d(v,w)$ between two vertices $v,w$ of $G$ is the length of the shortest path connecting them; the diameter $\mathrm{diam}(G)$ of $G$ is the maximum distance between two vertices
of $G$, and a shortest path connecting two vertices $v$ and $w$ for which $d(v,w)=\mathrm{diam}(G)$, is called a longest shortest path of $G$. Finally, in \cite{CE}, it is shown that $G$ is closed if and only if it is chordal, claw-free and narrow.

\begin{exam}
{\em Let $G$ be the graph depicted in Figure~\ref{Switching1}. The edge $e$ is a free cut edge and the vertex $z$ is a free vertex of $G\setminus e$. We can apply free cut edge switching to $G$. We delete $e$ and join the free vertices $v,w$ of $G\setminus e$, by an edge $e'$ to obtain the graph $G'$, shown in Figure~\ref{Switching2}. The edge $e'$ is also a free cut edge of $G'$. Note that, $\mathrm{diam}(G)=4$ and $\mathrm{diam}(G')=5$. Thus, obviously, $G$ and $G'$ are non-isomorphic graphs, for instance, by the above criterion of Cox and Erskine, we have $G$ is non-closed and $G'$ is closed. Note that, $P:x,v,z,u,y$ is a longest shortest path of $G$, and the distance of $w$ from $P$ is $2$, which implies that $G$ is not narrow nd hence not closed. Using the same criterion, one could easily see that $G'$ is closed. But, as we see below, many properties of them coincide. }
\end{exam}
\begin{center}
\begin{figure}
%\vspace{1 cm}
\hspace{0 cm}
\includegraphics[height=3.1cm,width=4.3cm]{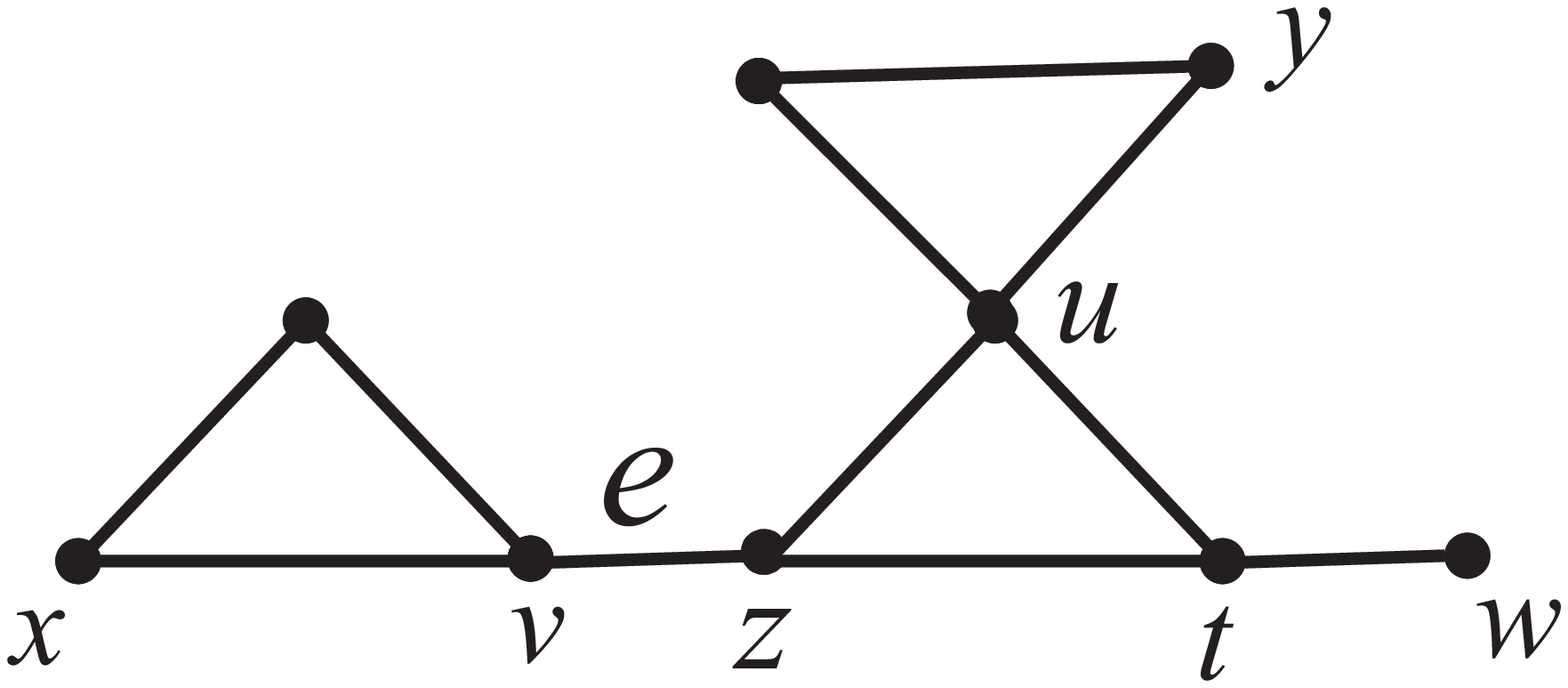}
\caption{\footnotesize{}}\hspace{1.5 cm}
\label{Switching1}
\end{figure}
\end{center}
\begin{center}
\begin{figure}
%\vspace{1 cm}
\hspace{0 cm}
\includegraphics[height=3.1cm,width=4.3cm]{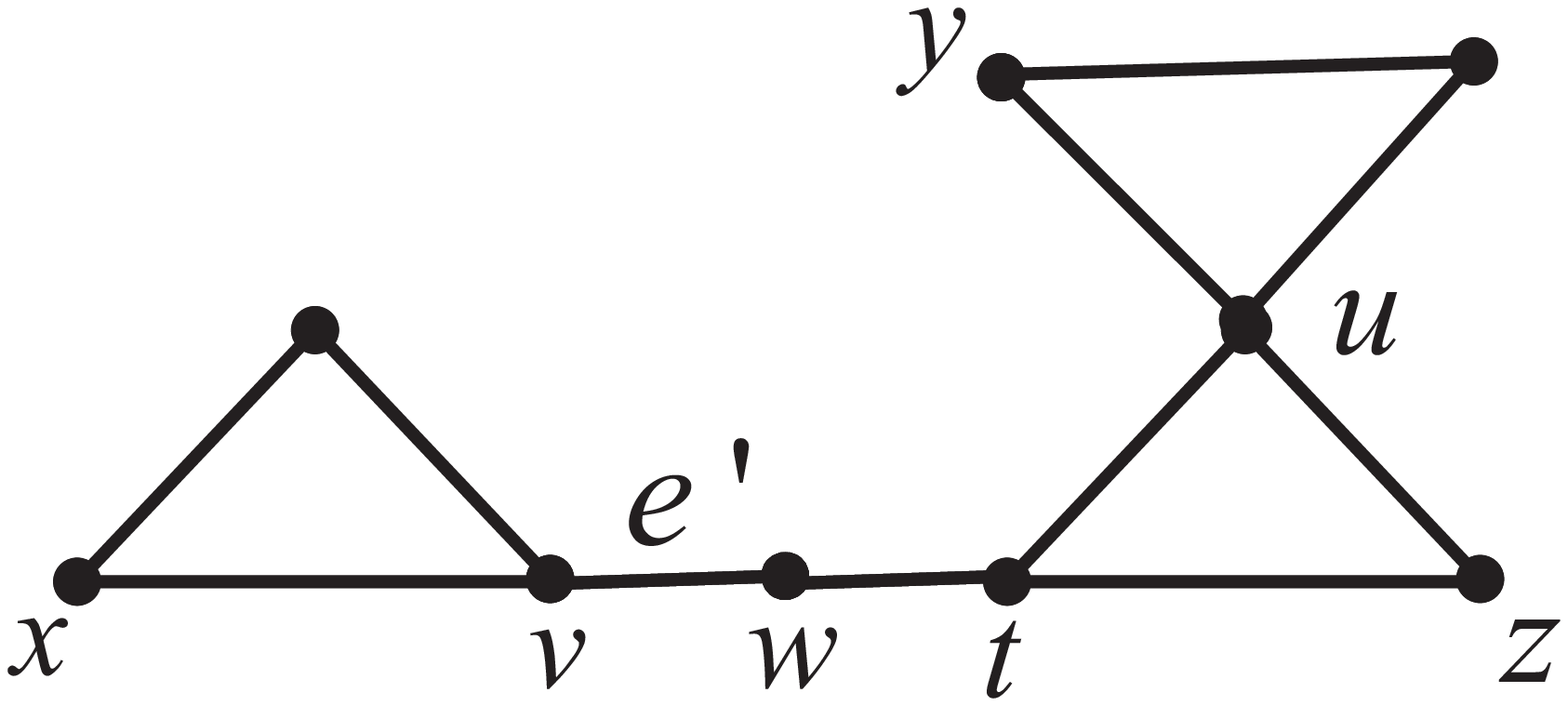}
\caption{\footnotesize{}}\hspace{1.5 cm}
\label{Switching2}
\end{figure}
\end{center}
By the construction of the switching and Proposition~\ref{colon2}, we have:

\begin{prop}\label{switching-betti}
Let $G$ be a graph and $G'$ a graph obtained from $G$ by a "free cut edge" switching. Then \\
{\em{(a)}} $\beta_{i,j}(J_G)=\beta_{i,j}(J_{G'})$, for all $i,j$. \\
{\em{(b)}} $\mathrm{pd}(J_G)=\mathrm{pd}(J_{G'})$. \\
{\em{(c)}} $\mathrm{reg}(J_G)=\mathrm{reg}(J_{G'})$.
\end{prop}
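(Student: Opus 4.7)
The plan is to observe that the free cut edge switching produces two graphs $G$ and $G'$ whose binomial edge ideals both fit the hypotheses of Proposition~\ref{colon2}, and, crucially, both reduce to the \emph{same} intermediate graph when the distinguished free cut edge is removed. This reduces the whole proposition to a direct comparison of two instances of Proposition~\ref{colon2}.

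First I would unpack the definition. By Definition~\ref{switching}, we start with a graph $G$ in which $e$ is a free cut edge, and we form $G' = (G \setminus e) \cup e'$, where $e'$ is required to be a free cut edge of $G'$. The first key step is the identity
\[
G \setminus e \;=\; G' \setminus e',
\]
which is immediate from the construction: $G'$ and $G$ share the same vertex set, and removing $e'$ from $G'$ exactly undoes the addition of $e'$, leaving $G \setminus e$. This is the only combinatorial content.

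Next I would apply Proposition~\ref{colon2} to $G$ using the free cut edge $e$, obtaining
\[
\beta_{i,j}(J_G) \;=\; \beta_{i,j}(J_{G\setminus e}) \;+\; \beta_{i-1,j-2}(J_{G\setminus e}),
\]
together with the analogous equalities $\mathrm{pd}(J_G) = \mathrm{pd}(J_{G\setminus e}) + 1$ and $\mathrm{reg}(J_G) = \mathrm{reg}(J_{G\setminus e}) + 1$. Then I would apply the same proposition to $G'$ using its free cut edge $e'$, which is legitimate by the definition of the switching, to get
\[
\beta_{i,j}(J_{G'}) \;=\; \beta_{i,j}(J_{G'\setminus e'}) \;+\; \beta_{i-1,j-2}(J_{G'\setminus e'}),
\]
and the corresponding statements for $\mathrm{pd}$ and $\mathrm{reg}$.

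Finally, substituting $G' \setminus e' = G \setminus e$ into the second set of formulas and comparing term-by-term with the first set yields $\beta_{i,j}(J_G) = \beta_{i,j}(J_{G'})$ for all $i,j$, together with $\mathrm{pd}(J_G) = \mathrm{pd}(J_{G'})$ and $\mathrm{reg}(J_G) = \mathrm{reg}(J_{G'})$. There is essentially no obstacle here: the entire proof is the verification that the switching is constructed precisely so that the "reduced" graph underlying each side coincides, and the heavy lifting has already been done in Proposition~\ref{colon2}. The only point to be careful about is confirming that $e'$ is indeed a free cut edge of $G'$ (this is part of the definition of the switching, so no extra work is needed) so that Proposition~\ref{colon2} applies symmetrically to both graphs.
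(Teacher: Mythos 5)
Your argument is correct and is exactly the one the paper intends: the paper states the proposition with the one-line justification ``by the construction of the switching and Proposition~\ref{colon2}'', which amounts precisely to your observation that $G\setminus e = G'\setminus e'$ and that Proposition~\ref{colon2} applies to both $G$ (via $e$) and $G'$ (via $e'$), giving identical formulas. No difference in approach; your write-up just makes the implicit details explicit.
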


\begin{rem}\label{invariants}
{\em The above proposition shows that the graded Betti numbers, projective dimension and regularity of the binomial edge ideals of graphs are invariant under "free cut edge" switching. Thus, through this operation, we might get some non-isomorphic graphs with different binomial edge ideals, whose minimal graded free resolutions are numerically the same. One could also ask similar questions about some other algebraic or combinatorial properties and invariants, through this switching. }
\end{rem}

\begin{rem}\label{cut edge switching}
{\em One may think of defining a more general method of switching, using "cut edges" instead of "free cut edges". But, the properties mentioned in Proposition~\ref{switching-betti} are not valid in that case anymore. For example, consider $G$ to be the graph shown in Figure~\ref{tree}. The edge $e$ is a cut edge, which is not a free cut edge. If we delete the edge $e$ and add $e':=\{x,y\}$ instead, we gain $P_6$. As we showed in the previous section, $J_{P_6}$ has a pure resolution, but $J_{G}$ does not. }
\end{rem}
\begin{center}
\begin{figure}
%\vspace{1 cm}
\hspace{0 cm}
\includegraphics[height=1.5cm,width=3.1cm]{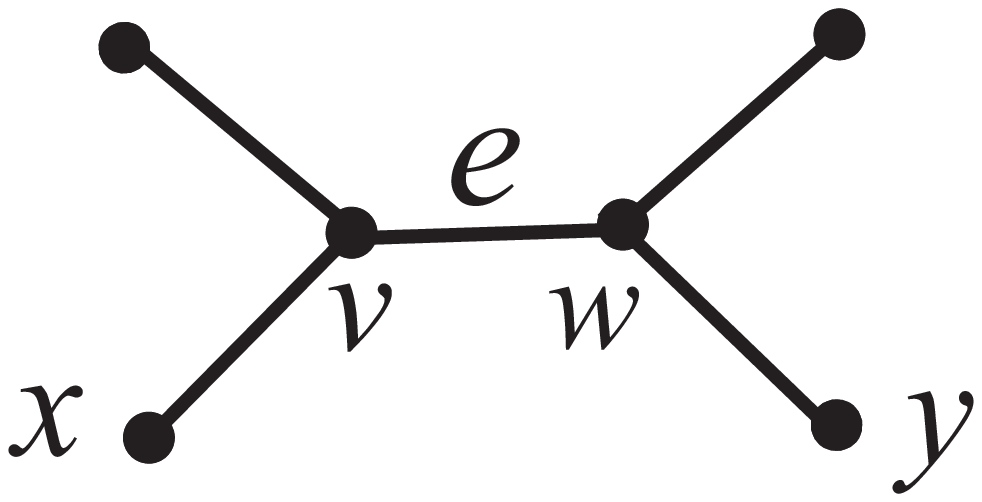}
\caption{\footnotesize{}}\hspace{2 cm}
\label{tree}
\end{figure}
\end{center}
Now, we want to compute the exact values of some of these algebraic invariants of binomial edge ideals of some classes of graphs. \\

Let $m\geq 2$. A \textbf{lollipop} graph, denoted by $L_{m,t}$, is a graph which is obtained from a complete graph $K_m$ and a path $P_t$ such that a vertex of $K_m$ and a leaf of $P_t$ are joined. Figure~\ref{Lollipop} shows a lollipop graph $L_{m,t}$.
\begin{center}
\begin{figure}
%\vspace{1 cm}
\hspace{0 cm}
\includegraphics[height=1.6cm,width=7.5cm]{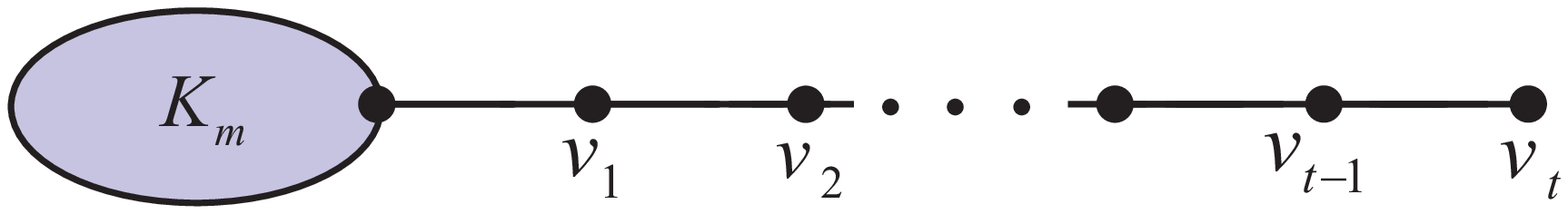}
\caption{\footnotesize{}}\hspace{2 cm}
\label{Lollipop}
\end{figure}
\end{center}
A \textbf{k-handle lollipop} graph, denoted by $L_{m,t_1,\ldots,t_k}$, where $m\geq 2$ and $1\leq k\leq m$, is a graph obtained from a complete graph $K_m$ and $k$ paths of lengths $t_1,\ldots,t_k$, such that a leaf of each path is joined to a vertex of $K_m$, respectively. One could easily see that $L_{m,t_1,\ldots,t_k}$ is obtained by some consecutive "free cut edge" switchings, from $L_{m,t}$, where $t=\sum_{i=1}^k t_i$. Note that the order of $t_1,\ldots,t_k$ is not important in the notation of $L_{m,t_1,\ldots,t_k}$. It is easy to see that $L_{m,t}$ is a closed graph, but $L_{m,t_1,\ldots,t_k}$ might not be closed. Figure~\ref{Lollipop1} shows the lollipop graph $L_{3,3}$, and Figure~\ref{Lollipop2} shows the $2$-handle lollipop graph $L_{3,1,2}$, which is obtained by a "free cut edge" switching from $L_{3,3}$.

One of the benefits of Proposition~\ref{switching-betti} is that while we study such algebraic invariants, we can restrict the problem to a much simpler case. The following proposition could be a good example of this fact. There, we turn the problem to one for a simple closed graph. Note that the result of \cite{ZZ} on the class $\mathcal{G}_3$ of graphs is a special case of the following.

\begin{prop}\label{lollipop}
Let $G=L_{m,t_1,\ldots,t_k}$ be a $k$-handle lollipop graph, where $t=\sum_{i=1}^k t_i$. Then we have
\begin{displaymath}
\beta_{i,i+j}(S/J_G)=\left \{\begin {array}{ll} (i-j+1){m\choose i-j+2}{t\choose j-1}&if~~~
1\leq j\leq i-1\\\\
{t\choose i-1}{m\choose 2}+{t\choose i}&if~~~j=i.
\end{array}\right.
\end{displaymath}
Moreover, $\mathrm{pd}(S/J_G)=m+t-1$ and $\mathrm{reg}(S/J_G)=t+1$.
\end{prop}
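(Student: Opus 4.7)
\medskip

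The plan is to exploit Proposition~\ref{switching-betti} together with the iterated application of Proposition~\ref{colon2} in order to reduce everything to the (known) Betti table of $J_{K_m}$.

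First, the excerpt already observes that $L_{m,t_1,\dots,t_k}$ arises from the simple lollipop $L_{m,t}$ with $t=\sum_{i=1}^k t_i$ by a sequence of \emph{free cut edge} switchings. By Proposition~\ref{switching-betti}(a)--(c), all graded Betti numbers, the projective dimension and the regularity are preserved under such switchings. Hence it suffices to establish the formulas for $G=L_{m,t}$, which is a closed graph and much easier to peel apart. I would record this reduction as the first step.

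Next, in $L_{m,t}$ label the clique part $K_m$ and the path part as $1,m+1,m+2,\dots,m+t$, with path edges $e_s=\{m+s-1,m+s\}$ for $s\ge 2$ and $e_1=\{1,m+1\}$. Removing the edges $e_t,e_{t-1},\dots,e_1$ one at a time, each removed edge is a free cut edge of the current graph (both endpoints become either isolated or leaves, hence free vertices). I would thus apply Proposition~\ref{colon2}(a) iteratively to obtain the Pascal-type recursion
\[
\beta_{i,i+j}(S/J_{L_{m,t}}) = \beta_{i,i+j}(S/J_{L_{m,t-1}}) + \beta_{i-1,(i-1)+(j-1)}(S/J_{L_{m,t-1}}).
\]
An easy induction on $t$ then gives the closed form
\[
\beta_{i,i+j}(S/J_{L_{m,t}}) = \sum_{k=0}^{t}\binom{t}{k}\,\beta_{i-k,(i-k)+(j-k)}(S/J_{K_m}).
\]
Since $J_{K_m}$ is the ideal of $2$-minors of a generic $2\times m$ matrix, it has a $2$-linear (Eagon--Northcott) resolution with $\beta_{i,i+1}(S/J_{K_m})=i\binom{m}{i+1}$ for $1\le i\le m-1$, together with $\beta_{0,0}(S/J_{K_m})=1$, and zero otherwise. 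Plugging this into the sum, the only surviving terms are $k=j-1$ (giving a contribution $\binom{t}{j-1}(i-j+1)\binom{m}{i-j+2}$, valid whenever $i-j+1\ge 1$) and, only in the case $j=i$, the additional term $k=i=j$ contributing $\binom{t}{i}\cdot 1$. This yields exactly the two cases stated in the proposition.

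Finally, for the projective dimension and regularity, I would iterate Proposition~\ref{colon2}(b) and (c) along the same sequence of $t$ free cut edge deletions, obtaining $\mathrm{pd}(J_{L_{m,t}})=\mathrm{pd}(J_{K_m})+t=(m-2)+t$ and $\mathrm{reg}(J_{L_{m,t}})=\mathrm{reg}(J_{K_m})+t=2+t$, which translate to $\mathrm{pd}(S/J_G)=m+t-1$ and $\mathrm{reg}(S/J_G)=t+1$. The main technical point (and the only place care is needed) is verifying that each $e_s$ really is a free cut edge at the moment of removal---i.e. that after deleting $e_{s+1},\dots,e_t$, both endpoints of $e_s$ are free vertices of the resulting graph's clique complex---and keeping the bookkeeping of indices in the binomial sum straight; no deeper obstacle arises because the switching invariance does all the heavy lifting in passing from $L_{m,t_1,\dots,t_k}$ back to the closed graph $L_{m,t}$.
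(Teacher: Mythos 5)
Your proposal is correct, and the reduction steps (passing from $L_{m,t_1,\ldots,t_k}$ to $L_{m,t}$ via Proposition~\ref{switching-betti}, the Eagon--Northcott input for $K_m$, and the use of Proposition~\ref{colon2}(b),(c) for $\pd$ and $\reg$) coincide with the paper's. Where you genuinely diverge is in how you obtain the central identity $\beta_{i,i+j}(S/J_{L_{m,t}})=\sum_{l=0}^{t}\binom{t}{l}\beta_{i-l,\,i+j-2l}(S/J_{K_m})$: you peel off the $t$ path edges one at a time, checking that each is a free cut edge at the moment of removal, and run a Pascal-type induction on Proposition~\ref{colon2}(a). The paper instead works with the Betti polynomial of the initial ideal, writing $B_{S/\mathrm{in}_{<}J_G}(p,q)=B_{S/\mathrm{in}_{<}J_{K_m}}(p,q)\,(1+pq^2)^t$ because the initial ideals of the cliques live on disjoint variables, and then transfers this to $J_G$ itself using the Cohen--Macaulayness of $S/J_G$ (\cite[Theorem~1.1 and Proposition~3.2]{EHH}), which gives $\beta_{i,j}(S/J_G)=\beta_{i,j}(S/\mathrm{in}_{<}J_G)$. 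Your route is more elementary and self-contained: it needs nothing beyond Proposition~\ref{colon2}, whereas the paper's argument imports the Cohen--Macaulay theorem and the closedness of $L_{m,t}$; on the other hand, the paper's Betti-polynomial computation generalizes more readily to other closed graphs whose cliques are intervals. The only small points to keep straight in your version are the index shift between $\beta_{i,j}(J_G)$ and $\beta_{i,j}(S/J_G)$ when invoking Proposition~\ref{colon2}(a) (together with the convention $\beta_{0,0}(S/J)=1$, which is what produces the extra $\binom{t}{i}$ term when $j=i$), and the observation that deleting an edge that isolates a vertex does not change the binomial edge ideal; both are routine and your verification that each $e_s$ is free at the moment of removal is exactly the right check.
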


\begin{proof}
As mentioned above, it is enough to prove the statements for $G:=L_{m,t}$. Since $G$ is closed, we have $\beta_{i,i+j}=0$, for all $j>i$, by \cite[Theorem~2.2, part (c)]{SK}. We can label the vertices of $G$, such that the cliques of $G$ are intervals $[1,m]$ and $[m+i,m+i+1]$ for $0\leq i\leq t-1$. Now, we use the Betti polynomial $B_{S/J_G}(p,q)=\sum_{i,j}\beta_{i,j}(S/J_G)p^iq^j$. We have $B_{S/\mathrm{in}_{<}J_G}(p,q)=\sum_{i,j}\beta_{i,j}(S/\mathrm{in}_{<}J_G)p^iq^j=B_{S/\mathrm{in}_{<}J_{K_m}}(p,q){(B_{S/\mathrm{in}_{<}J_{K_2}}(p,q))}^t$, since $\mathrm{in}_{<}J_{K_m}$ and other $t$ monomial ideals $\mathrm{in}_{<}J_{K_2}$ associated to the cliques of $G$, are on disjoint sets of variables. By \cite[Theorem~1.1]{EHH}, $S/J_G$ is Cohen-Macaulay. So, by \cite[Proposition~3.2]{EHH}, we have $\beta_{i,j}(S/J_G)=\beta_{i,j}(S/\mathrm{in}_{<}J_G)$, for all $i,j$. So, we get $$B_{S/J_G}(p,q)=B_{S/J_{K_m}}(p,q){(1+pq^2)}^t.$$ Now, we have that $\beta_{i,i+j}(S/J_G)$ is equal to the coefficient of $p^iq^{i+j}$ in the latter polynomial. By an easy computation, we have $$\beta_{i,i+j}(S/J_G)=\sum_{l=0}^t{t\choose l}\beta_{i-l,i+j-2l}(S/J_{K_m}),$$ for all $i,j$. Since the Eagon-Northcott complex minimally resolves $S/J_{K_m}$, we have that for $i\geq 1$, $\beta_{i,i+j}(S/J_{K_m})=i{m\choose i+1}$, if $j=1$, and $\beta_{i,i+j}(S/J_{K_m})=0$, otherwise. Thus, we get $\beta_{i,i+j}(S/J_G)={t\choose j-1}\beta_{i-j+1,i-j+2}(S/J_{K_m})=(i-j+1){m\choose i-j+2}{t\choose j-1}$, for $1\leq j\leq i-1$, and $\beta_{i,2i}(S/J_G)={t\choose i-1}\beta_{1,2}(S/J_{K_m})+{t\choose i}\beta_{0,0}(S/J_{K_m})={t\choose i-1}{m\choose 2}+{t\choose i}$. On the other hand, since $\mathrm{pd}(S/J_{K_m})=m-1$ and $\mathrm{reg}(S/J_{K_m})=1$, the result follows by Proposition~\ref{colon2}.
\end{proof}

\begin{center}
\begin{figure}
%\vspace{1 cm}
\hspace{0 cm}
\includegraphics[height=2.68cm,width=3.45cm]{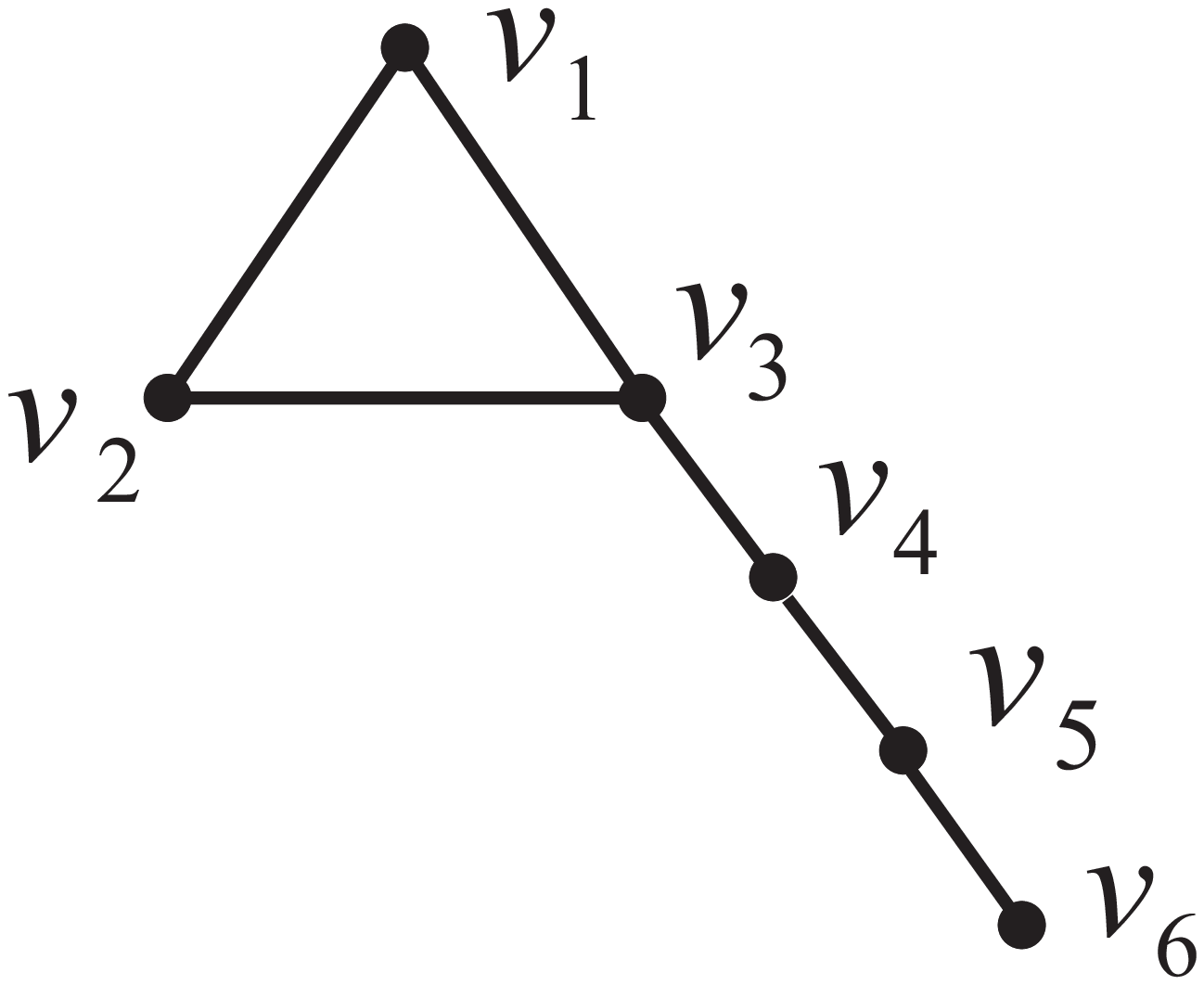}
\caption{\footnotesize{}}\hspace{2 cm}
\label{Lollipop1}
\end{figure}
\end{center}
\begin{center}
\begin{figure}
%\vspace{1 cm}
\hspace{0 cm}
\includegraphics[height=2.89cm,width=3.25cm]{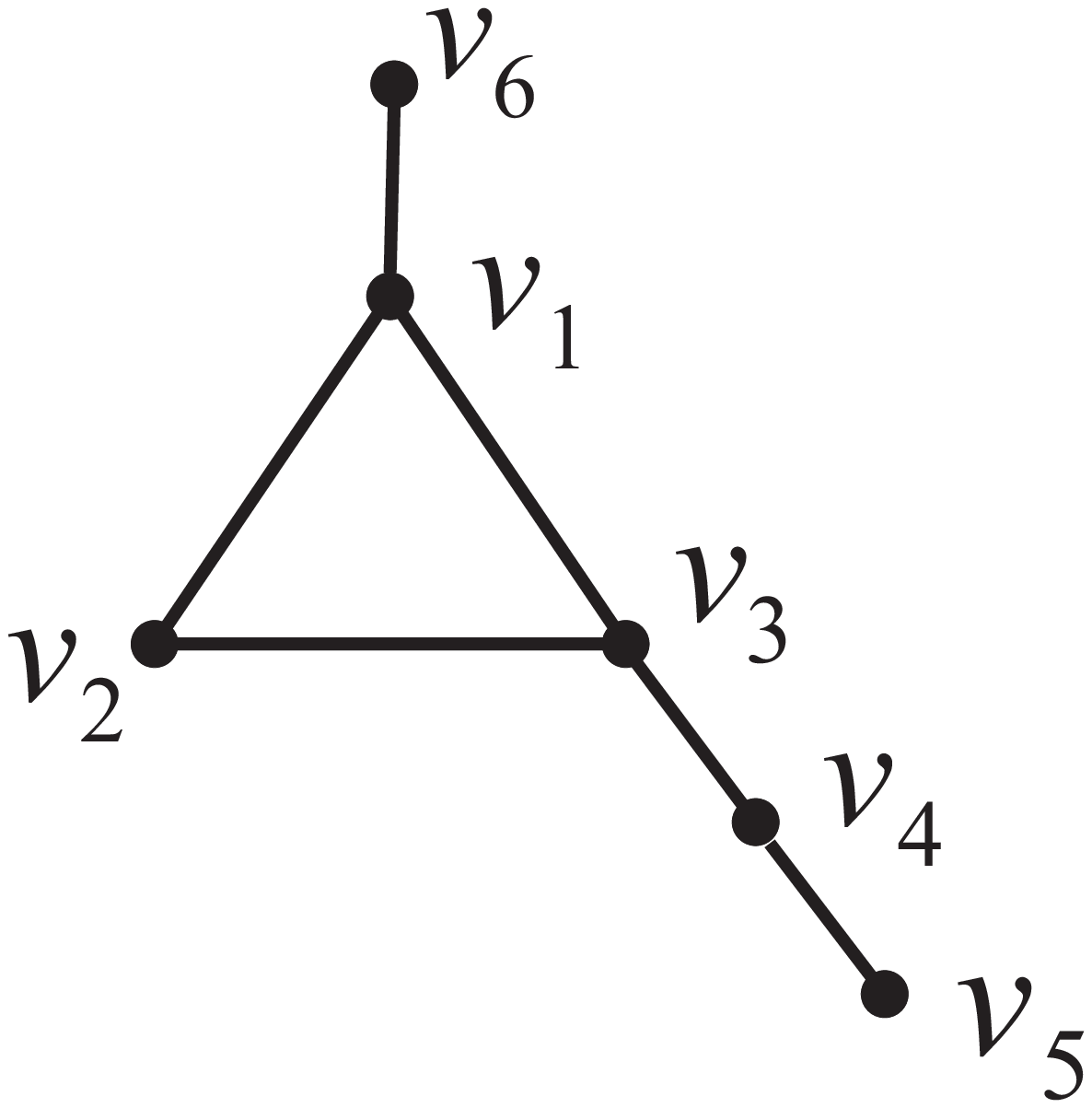}
\caption{\footnotesize{}}\hspace{2 cm}
\label{Lollipop2}
\end{figure}
\end{center}
Let $G$ be a graph. For every $i\geq 1$, denoted by $k_{i}(G)$, we mean the number of cliques of $G$ which are isomorphic to the complete graph $K_{i}$. As an immediate consequence of Proposition~\ref{lollipop}, we have:

\begin{cor}\label{linear strand-lollipop}
Let $G=L_{m,t_1,\ldots,t_k}$ be a $k$-handle lollipop graph. Then we have $\beta_{i,i+2}(J_G)=(i+1)k_{i+2}(G)$.
\end{cor}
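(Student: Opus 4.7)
The plan is to derive the formula by specializing Proposition~\ref{lollipop}, combined with two small observations: the standard shift $\beta_{i,j}(J_G)=\beta_{i+1,j}(S/J_G)$ coming from the short exact sequence $0\to J_G\to S\to S/J_G\to 0$, and a direct combinatorial reading of $k_{i+2}(G)$ in terms of $m$ and $t=\sum_{i=1}^{k}t_i$.

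First, I would translate the target into the language of the proposition: $\beta_{i,i+2}(J_G)=\beta_{i+1,(i+1)+1}(S/J_G)$, so the quantity of interest lies in the $j=1$ strand in the notation of Proposition~\ref{lollipop}. For $i\ge 1$, I would substitute homological index $i+1$ and $j=1$ into the first case of the formula there; the hypothesis $1\le j\le (i+1)-1$ is satisfied, and the substitution yields
\[
\beta_{i+1,i+2}(S/J_G)=((i+1)-1+1)\binom{m}{(i+1)-1+2}\binom{t}{0}=(i+1)\binom{m}{i+2}.
\]
The edge case $i=0$ falls outside this branch and must instead be read from the $j=i$ branch of Proposition~\ref{lollipop} (equivalently, one just counts generators of $J_G$), giving $\beta_{0,2}(J_G)=\binom{m}{2}+t$.

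The last step is the combinatorial identification of $k_{i+2}(G)$. Because each handle is a path attached to $K_m$ at a single vertex, no new triangle is introduced by the attachment, so every clique of $G$ of size at least three must lie inside the $K_m$-part; hence $k_{i+2}(G)=\binom{m}{i+2}$ for all $i\ge 1$. For $i=0$, every edge of $G$ is a $K_2$-clique, giving $k_2(G)=\binom{m}{2}+t$, which matches the generator count $(0+1)k_2(G)=\binom{m}{2}+t$. Putting the two steps together yields the identity $\beta_{i,i+2}(J_G)=(i+1)k_{i+2}(G)$ uniformly in $i\ge 0$.

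There is essentially no serious obstacle; the entire argument is bookkeeping. The only small care required is choosing the correct branch of the piecewise formula in Proposition~\ref{lollipop} for the boundary case $i=0$, and noting explicitly that the handles contribute only $K_2$-cliques so that the binomial coefficient $\binom{m}{i+2}$ coming out of the formula can legitimately be renamed $k_{i+2}(G)$ for $i\ge 1$.
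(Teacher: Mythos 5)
Your proposal is correct and is exactly the computation the paper has in mind: the corollary is stated as an immediate consequence of Proposition~\ref{lollipop}, obtained by the shift $\beta_{i,i+2}(J_G)=\beta_{i+1,i+2}(S/J_G)$, reading off the $J=1$ strand to get $(i+1)\binom{m}{i+2}$ for $i\ge 1$ (and the edge count for $i=0$), and identifying $\binom{m}{i+2}$ with $k_{i+2}(G)$ since the pendant paths create no cliques of size $\ge 3$. No discrepancy with the paper's (implicit) argument.
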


\begin{rem}\label{Sharpness2}
{\em By Proposition~\ref{lollipop}, we have $\mathrm{reg}(S/J_G)=c(G)+1$, where $G$ is a $k$-handle lollipop and $c(G)$ is the number of maximal cliques of $G$. So, it gives some closed and non-closed graphs for which the bound claimed in \cite[Conjecture~B]{KS} is the best. So, as we also mentioned in \cite[Remark~3.22]{KS}, if this conjecture could be proved, then the given bound is sharp. }
\end{rem}

Now, we focus on the linear strand of the binomial edge ideals. The following theorem determines the linear strand of the binomial edge ideal of a class of graphs, which is closed under "free cut edge" switching, that is if $G$ belongs to this class, then every (possible) "free cut edge" switching of $G$ also is in this class. It generalizes Corollary~\ref{linear strand-lollipop}.

\begin{thm}\label{linear strand2}
Let $G$ be a graph such that each connected component of $\mathcal{R}(G)$ is either $K_3$-free or a clique. Then $\beta_{i,i+2}(J_G)=(i+1)k_{i+2}(G)$.
\end{thm}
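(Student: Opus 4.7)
The plan is to reduce the problem to computing the linear strand on each connected component of $\mathcal{R}(G)$, which by hypothesis is either a clique or $K_3$-free.

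First, I would iterate Proposition~\ref{colon2}(a) over the free cut edges of $G$. For each such edge $e$, the identity $\beta_{i,i+2}(J_G) = \beta_{i,i+2}(J_{G \setminus e}) + \beta_{i-1,i}(J_{G \setminus e})$ holds, but the second summand vanishes: $J_{G \setminus e}$ is generated in degree $2$, so $\beta_{a,b}(J_{G \setminus e}) = 0$ whenever $b < a+2$, and in particular $\beta_{i-1,i}(J_{G \setminus e}) = 0$. Hence $\beta_{i,i+2}(J_G) = \beta_{i,i+2}(J_{\mathcal{R}(G)})$. At the same time, a free cut edge cannot belong to any triangle of $G$ (a common neighbour of its endpoints would keep them in the same component after deletion), so $k_j(G) = k_j(\mathcal{R}(G))$ for all $j \geq 3$. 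The case $i=0$ of the formula is the trivial identity $\beta_{0,2}(J_G) = |E(G)| = k_2(G)$.

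Next, writing $\mathcal{R}(G) = H_1 \sqcup \cdots \sqcup H_s$ on pairwise disjoint vertex sets, the minimal $S$-free resolution of $S/J_{\mathcal{R}(G)}$ is the tensor product of those of the $S/J_{H_k}$. A short degree-count in the resulting K\"unneth-type formula, using $\beta_{a,b}(J_{H_k}) = 0$ for $b < a+2$, forces all but one of the indices in any nonzero product to lie at $(0,0)$, which yields $\beta_{i,i+2}(J_{\mathcal{R}(G)}) = \sum_{k=1}^{s} \beta_{i,i+2}(J_{H_k})$. Combined with $k_{i+2}(\mathcal{R}(G)) = \sum_k k_{i+2}(H_k)$, this reduces the theorem to verifying the formula on each component $H_k$. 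When $H_k$ is a clique $K_m$, the Eagon--Northcott complex minimally resolves $S/J_{K_m}$ (as used in the proof of Proposition~\ref{lollipop}) and gives $\beta_{i,i+2}(J_{K_m}) = (i+1)\binom{m}{i+2} = (i+1)k_{i+2}(K_m)$.

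The hard part is the $K_3$-free case: for a $K_3$-free component $H_k$ one has $k_{i+2}(H_k) = 0$ for $i \geq 1$, so the claim becomes $\beta_{i,i+2}(J_{H_k}) = 0$ for every $i \geq 1$. For $i = 1$ this is immediate from \cite[Theorem~2.2]{SK}, since $H_k$ has no $C_3$. For $i \geq 2$ one expects to argue via the linear part of the minimal free resolution of $J_{H_k}$: any nonzero contribution to $\beta_{i,i+2}(J_{H_k})$ should be forced to come from an induced $K_{i+2}$ in $H_k$, but no such induced clique exists. Making this precise---by induction on $|V(H_k)|$ combined with Proposition~\ref{colon1} applied to a carefully chosen edge, or via a cellular/Taylor-type resolution supported on the clique complex of $H_k$---is the technical obstacle of the theorem.
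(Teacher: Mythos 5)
Your overall strategy coincides with the paper's: reduce to $\mathcal{R}(G)$ via Proposition~\ref{colon2}(a) (noting that the shifted term $\beta_{i-1,i}$ vanishes because the ideal is generated in degree $2$, and that a cut edge lies on no triangle, so $k_j(G)=k_j(\mathcal{R}(G))$ for $j\geq 3$), split over connected components using the tensor product of resolutions, and handle cliques with the Eagon--Northcott complex. All of that is exactly what the paper does.

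The one place where your argument is not complete is precisely the step you flag as ``the technical obstacle'': the vanishing $\beta_{i,i+2}(J_H)=0$ for all $i\geq 1$ when $H$ is $K_3$-free. This is not something the paper proves here either --- it is quoted as a known result, namely \cite[Corollary~2.3]{SK}, which states that the linear strand of the binomial edge ideal of a triangle-free graph vanishes beyond homological degree $0$. So your proposal is missing the single nontrivial external ingredient rather than containing a wrong step; none of the routes you sketch for it (induction with Proposition~\ref{colon1}, or a cellular resolution on the clique complex) is what the source result uses, and as written your proof does not go through without importing that citation. With \cite[Corollary~2.3]{SK} in hand, the rest of your argument is correct and is essentially identical to the paper's proof.
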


\begin{proof}
Obviously, $\beta_{0,2}(J_G)=k_2(G)$. Thus, assume that $i\geq 1$. Let $R_1,\ldots,R_q$ be the connected components of $\mathcal{R}(G)$, where $q\geq 1$. Since $J_{R_1},\ldots,J_{R_q}$ are on the disjoint sets of variables, the minimal graded free resolution of $S/J_{\mathcal{R}(G)}$ is the tensor product of those of $S/J_{R_j}$, for all $j=1,\ldots,q$. So, by an easy computation, we have $\beta_{i,i+2}(J_{\mathcal{R}(G)})=\sum_{j=1}^q \beta_{i,i+2}(J_{R_j})$. By assumption, each $R_j$ is a clique or a $K_3$-free graph. If $R_j$ is a clique for some $j$, then by the Eagon-Northcott complex, as was mentioned in the proof of Proposition~\ref{lollipop}, we have $\beta_{i,i+2}(J_{R_j})=(i+1)k_{i+2}(R_{j})$. If $R_j$ is $K_3$-free for some $j$, then we have also $\beta_{i,i+2}(J_{R_j})=0$, by \cite[Corollary~2.3]{SK}. Therefore, we get $\beta_{i,i+2}(J_{\mathcal{R}(G)})=(i+1)\sum_{j=1}^q k_{i+2}(R_j)=(i+1)k_{i+2}(\mathcal{R}(G))$. But, $k_{i+2}(G)=k_{i+2}(\mathcal{R}(G))$, because we assumed that $i\geq 1$. So, the statement follows by Proposition~\ref{colon2} (see also \cite[Corollary~3.12]{KS}).
\end{proof}

In Theorem~\ref{linear strand2}, we precisely obtained the linear strand of the binomial edge ideal of some graphs. On the other hand, for every graph $G$, we have $\beta_{1,3}(J_G)=2k_3(G)$, by \cite[Theorem~2.2, part (a)]{SK}. It is an interesting problem to ask about the linear strand of the binomial edge ideal of a graph, in general. By the above results and also some computations with \textit{CoCoA}, it seems that the formula gained in the above cases might be a general formula. So that we pose the following conjecture: \\

\noindent \textbf{Conjecture.} Let $G$ be a graph. Then $\beta_{i,i+2}(J_G)=(i+1)k_{i+2}(G)$. \\

Ene, Herzog and Hibi's conjecture mentioned at the end of \cite{EHH}, together with the following proposition, might strengthen the conjecture in the case of closed graphs. By the notation of \cite{SK} and \cite{SK1}, we use $\mathrm{in}_{<}(G)$ to denote the bipartite graph associated to a closed graph $G$. Also, as usual, we denote the (monomial) edge ideal of a graph $G$, by $I(G)$. Moreover, recall that the linear strand of $I(G)$ is computed as follows:

\begin{prop}\label{Roth-Van Tuyl}
\cite[Proposition~2.1]{RV} Let $G$ be a simple graph. Then $$\beta_{i,i+2}(I(G))=\sum_{W\subseteq V,|W|=i+2}(\sharp \mathrm{comp}(G_{W}^c)-1),$$
where $\sharp \mathrm{comp}(G_{W}^c)$ denotes the number of connected components of the induced subgraph of the complementary graph of $G$ on $W$.
\end{prop}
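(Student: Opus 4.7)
The plan is to derive the formula from Hochster's formula for the graded Betti numbers of a squarefree monomial ideal, applied to the Stanley--Reisner presentation of $I(G)$. Recall that $I(G)$ is the Stanley--Reisner ideal of the independence complex $\mathrm{Ind}(G)$ on vertex set $V$, whose faces are the independent sets of $G$. Shifting indices, $\beta_{i,i+2}(I(G)) = \beta_{i+1,i+2}(S/I(G))$, so I am after the $(i+1,i+2)$-entry of the resolution of $S/I(G)$.

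First I would invoke Hochster's formula for a squarefree monomial quotient:
\begin{equation*}
\beta_{a,b}(S/I(G)) \;=\; \sum_{W \subseteq V,\ |W|=b} \dim_K \tilde{H}_{b-a-1}\bigl(\mathrm{Ind}(G)_W;\,K\bigr),
\end{equation*}
where $\mathrm{Ind}(G)_W$ denotes the restriction of $\mathrm{Ind}(G)$ to the vertices in $W$. A standard check gives $\mathrm{Ind}(G)_W = \mathrm{Ind}(G_W)$, the independence complex of the induced subgraph. Specializing to $(a,b) = (i+1,i+2)$ collapses the homological degree to $0$, so
\begin{equation*}
\beta_{i,i+2}(I(G)) \;=\; \sum_{W\subseteq V,\ |W|=i+2} \dim_K \tilde{H}_0\bigl(\mathrm{Ind}(G_W);\,K\bigr).
\end{equation*}

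The next step is to identify $\dim_K \tilde{H}_0(\mathrm{Ind}(G_W))$ combinatorially. Since $\tilde{H}_0$ counts connected components (minus one) of a simplicial complex, and connectivity of a complex is detected already by its $1$-skeleton, I only need to determine the $1$-skeleton of $\mathrm{Ind}(G_W)$. Every singleton is independent, so all vertices of $W$ appear; a pair $\{u,v\}\subseteq W$ is a $1$-face iff $\{u,v\}$ is independent in $G_W$, iff $\{u,v\}$ is an edge of the complementary graph $G_W^c$. Hence the $1$-skeleton of $\mathrm{Ind}(G_W)$ is precisely the graph $G_W^c$, and therefore
\begin{equation*}
\dim_K \tilde{H}_0\bigl(\mathrm{Ind}(G_W);\,K\bigr) \;=\; \sharp\mathrm{comp}(G_W^c) - 1.
\end{equation*}
Substituting into the previous display yields the desired formula.

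The only real obstacle is technical bookkeeping: verifying the index shift in Hochster's formula (so that the linear strand truly corresponds to $\tilde{H}_0$) and the identity $\mathrm{Ind}(G)_W = \mathrm{Ind}(G_W)$. Neither is hard, but both must be stated carefully because off-by-one errors in the homological degree would destroy the whole computation. Once those are in place, the rest is a direct translation between reduced $0$-homology and connected components.
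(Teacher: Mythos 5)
Your derivation is correct: the index shift $\beta_{i,i+2}(I(G))=\beta_{i+1,i+2}(S/I(G))$, the application of Hochster's formula with homological degree $b-a-1=0$, the identification $\mathrm{Ind}(G)_W=\mathrm{Ind}(G_W)$, and the observation that the $1$-skeleton of $\mathrm{Ind}(G_W)$ is $G_W^c$ (which is nonempty since every singleton is independent, so $\dim_K\tilde{H}_0=\sharp\mathrm{comp}(G_W^c)-1$) all check out. Note that the paper itself offers no proof of this statement --- it is quoted verbatim from Roth and Van Tuyl \cite[Proposition~2.1]{RV} --- and your Hochster-formula argument is essentially the standard proof given there, so there is nothing to flag beyond confirming its correctness.
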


\begin{thm}\label{linear strand-closed}
Let $G$ be a closed graph. Then $$\beta_{i,i+2}(J_G)\leq \beta_{i,i+2}(\mathrm{in}_{<}J_G)=\beta_{i,i+2}(I(\mathrm{in}_{<}G))=(i+1)k_{i+2}(G).$$
\end{thm}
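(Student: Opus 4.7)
The plan is to verify the three relations in the chain separately. The first inequality $\beta_{i,i+2}(J_G)\leq\beta_{i,i+2}(\mathrm{in}_{<}(J_G))$ is the standard upper semi-continuity of graded Betti numbers under a Gr\"obner degeneration, applied with the lex order used in the definition of closedness. The middle equality is essentially a renaming: since $G$ is closed, the $f_{ij}$ form a Gr\"obner basis for $J_G$ with respect to this order, so $\mathrm{in}_{<}(J_G)$ is the squarefree monomial ideal generated by $\{x_iy_j : \{i,j\}\in E(G),\, i<j\}$, which is literally the edge ideal $I(\mathrm{in}_{<}(G))$ of the bipartite graph $\mathrm{in}_{<}(G)$ on $\{x_1,\ldots,x_n\}\cup\{y_1,\ldots,y_n\}$.

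The substantive part is the third equality, which I plan to establish by applying Proposition~\ref{Roth-Van Tuyl} to $\mathrm{in}_{<}(G)$. For $W\subseteq\{x_1,\ldots,x_n\}\cup\{y_1,\ldots,y_n\}$ of size $i+2$, put $A=W\cap\{x_1,\ldots,x_n\}$ and $B=W\cap\{y_1,\ldots,y_n\}$. Since $\mathrm{in}_{<}(G)$ is bipartite between the $x$-side and the $y$-side, the induced complement $(\mathrm{in}_{<}(G))^c_W$ has every within-$A$ pair and every within-$B$ pair as an edge, while a cross-pair $(x_a,y_b)$ is a complement-edge precisely when it is \emph{not} an edge of $\mathrm{in}_{<}(G)$. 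Hence $A$ lies in one connected component and $B$ in another, and these components coincide unless every cross-pair is an edge of $\mathrm{in}_{<}(G)$. Thus $W$ contributes $1$ to the Roth--Van Tuyl sum exactly when $A,B\neq\emptyset$ and the bipartite restriction of $\mathrm{in}_{<}(G)$ to $A\cup B$ is the complete bipartite graph $K_{A,B}$, and $0$ otherwise.

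To enumerate such $W$, write $A=\{x_{a_1},\ldots,x_{a_p}\}$ and $B=\{y_{b_1},\ldots,y_{b_q}\}$ with indices in increasing order. The completeness condition forces $\{a_r,b_s\}\in E(G)$ and $a_r<b_s$ for all $r,s$; in particular $a_p<b_1$. This is where closedness enters: applying the closed-graph rule to $a_r<a_{r'}<b_1$ with edges $\{a_r,b_1\},\{a_{r'},b_1\}$ yields $\{a_r,a_{r'}\}\in E(G)$, and applying it to $a_p<b_s<b_{s'}$ with edges $\{a_p,b_s\},\{a_p,b_{s'}\}$ yields $\{b_s,b_{s'}\}\in E(G)$. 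So $F_W:=\{a_1,\ldots,a_p,b_1,\ldots,b_q\}$ is a clique of $G$ of size $i+2$. Conversely, any clique $F=\{v_1<\cdots<v_{i+2}\}$ of $G$ together with a prefix cut $p\in\{1,\ldots,i+1\}$ produces a valid $W$ with $A=\{x_{v_1},\ldots,x_{v_p}\}$ and $B=\{y_{v_{p+1}},\ldots,y_{v_{i+2}}\}$. This establishes a bijection between contributing $W$'s and pairs (clique of $G$ of size $i+2$, cut position), giving the count $(i+1)k_{i+2}(G)$.

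The main obstacle is exactly this last combinatorial translation: converting complete bipartite subgraphs of $\mathrm{in}_{<}(G)$ into cliques of $G$, which is where closedness is used in an essential way; the rest of the argument is bookkeeping and classical upper semi-continuity.
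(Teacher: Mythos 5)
Your proposal is correct and follows essentially the same route as the paper's proof: semicontinuity for the first inequality, the identification $\mathrm{in}_{<}J_G=I(\mathrm{in}_{<}G)$ for the second, and then the Roth--Van Tuyl formula applied to the bipartite graph $\mathrm{in}_{<}G$, with the same bijection between the subsets $W$ having disconnected complement and pairs (clique of size $i+2$, cut position). Your write-up is in fact slightly more explicit than the paper's in spelling out how closedness converts complete bipartite restrictions into cliques of $G$.
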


\begin{proof}
The first inequality is well-known (see for example \cite[Corollary~3.3.3]{HH}). The first equality is based on the definition and notation mentioned in \cite[Section~3]{SK}. So, we should prove that $\beta_{i,i+2}(I(\mathrm{in}_{<}G))=(i+1)k_{i+2}(G)$. For simplicity, set $H:=\mathrm{in}_{<}G$. Note that $H$
is a bipartite graph with the vertex bipartition $V=X\cup Y$, where $X=\{x_1,\ldots,x_n\}$ and $Y=\{y_1,\ldots,y_n\}$. By Proposition~\ref{Roth-Van Tuyl}, one has $$\beta_{i,i+2}(I(H))=\sum_{W\subseteq V,|W|=i+2}(\sharp \mathrm{comp}(H_{W}^c)-1).$$ Let $W\subseteq V$, with $|W|=i+2$. Then $\sharp \mathrm{comp}(H_{W}^c)=1$ or $2$, since $H_{X}$ and $H_{Y}$ are cliques. Let $\sharp \mathrm{comp}(H_{W}^c)=2$. Then $W_1:=W\cap X\neq \emptyset$, $W_2:=W\cap Y\neq \emptyset$ and there is no edge between $W_1$ and $W_2$ . The indexes of all of the vertices of $W_1$ are less than all the vertices of $W_2$, because by the definition of $H$, all the edges of the form $\{x_j,y_l\}$, where $l\leq j$, are in the graph $H^c$. Set $W_1=\{x_{t_1},\ldots,x_{t_k}\}$ and $W_2=\{y_{t_{k+1}},\ldots,y_{t_{i+2}}\}$. Then, all the edges between $W_1$ and $W_2$ occur in $H$. These edges correspond to some edges of the clique of $H$ on $i+2$ vertices $\{v_{t_1},\ldots,v_{t_{i+2}}\}$, where $t_1<\cdots <t_{i+2}$, by using the closedness of $G$. Now, suppose that $C$ is a clique of $G$ on $i+2$ vertices $\{v_{t_1},\ldots,v_{t_{i+2}}\}$, where $t_1<\cdots <t_{i+2}$. Then, it gives exactly $i+1$ distinct subsets of $V$ with the above properties, namely $Z_k=\{x_{t_1},\ldots,x_{t_k},y_{t_{k+1}},\ldots,y_{t_{i+2}}\}$, where $1\leq k\leq i+1$. So that each clique of $H$ which is isomorphic to $K_{i+1}$ corresponds exactly to $i+1$ appropriate subsets of the vertex set of $H$. Thus, we have $\sum_{W\subseteq V,|W|=i+2}(\sharp \mathrm{comp}(H_{W}^c)-1)=(i+1)k_{i+2}(G)$, which implies the result.
\end{proof}

By Theorem~\ref{linear strand-closed} and \cite[Proposition~3.2]{EHH}, we get:

\begin{cor}\label{CM closed}
Let $G$ be a closed graph with Cohen-Macaulay binomial edge ideal. Then $\beta_{i,i+2}(J_G)=(i+1)k_{i+2}(G)$.
\end{cor}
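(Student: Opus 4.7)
The plan is essentially to upgrade the inequality in Theorem~\ref{linear strand-closed} to an equality by invoking the Cohen-Macaulay hypothesis. Recall that the earlier chain of identities
$$\beta_{i,i+2}(J_G)\leq \beta_{i,i+2}(\mathrm{in}_{<}J_G)=\beta_{i,i+2}(I(\mathrm{in}_{<}G))=(i+1)k_{i+2}(G)$$
was obtained from a general upper bound via the initial ideal. What is missing to close the gap is exactly a condition that forces equality between the graded Betti numbers of $S/J_G$ and of $S/\mathrm{in}_{<}J_G$.

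First I would appeal to \cite[Proposition~3.2]{EHH}, which (as already used in the proof of Proposition~\ref{lollipop}) asserts that whenever $G$ is closed and $S/J_G$ is Cohen-Macaulay, the equality $\beta_{i,j}(S/J_G)=\beta_{i,j}(S/\mathrm{in}_{<}J_G)$ holds for all $i,j$. Applying a standard degree shift, this passes to the ideals themselves, giving $\beta_{i,i+2}(J_G)=\beta_{i,i+2}(\mathrm{in}_{<}J_G)$ under our hypothesis.

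Next I would simply substitute this equality into the chain supplied by Theorem~\ref{linear strand-closed}: the middle term of that chain now coincides with the leftmost term, and the rightmost term is the desired $(i+1)k_{i+2}(G)$. Combining these gives $\beta_{i,i+2}(J_G)=(i+1)k_{i+2}(G)$, which is the statement.

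There is essentially no obstacle here; the corollary is a direct consequence of the previously established theorem together with a standard fact about closed graphs with Cohen-Macaulay binomial edge ideals. The only point requiring care is making sure the Cohen-Macaulay hypothesis is applied on $S/J_G$ (not on $J_G$) so that \cite[Proposition~3.2]{EHH} is invoked in the form it is stated, and then translating the Betti equality from the quotient rings to the ideals, which is harmless since $\beta_{i,j}(J_G)=\beta_{i+1,j}(S/J_G)$.
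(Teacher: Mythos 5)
Your proposal is correct and matches the paper's own (implicit) argument exactly: the paper derives the corollary precisely by combining Theorem~\ref{linear strand-closed} with \cite[Proposition~3.2]{EHH}, which under the closed and Cohen--Macaulay hypotheses forces $\beta_{i,j}(S/J_G)=\beta_{i,j}(S/\mathrm{in}_{<}J_G)$ and hence turns the inequality into an equality. Your remark about the harmless shift $\beta_{i,j}(J_G)=\beta_{i+1,j}(S/J_G)$ is a correct and slightly more careful bookkeeping than the paper bothers to record.
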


Theorem~\ref{linear strand-closed} might also support our conjecture for some non-closed graphs, as we see below.

\begin{cor}\label{linear strand3}
Let $G$ be a non-closed graph and $e$ be a cut edge of $G$. If $G\setminus e$ is a closed graph, then $\beta_{i,i+2}(J_G)\leq (i+1)k_{i+2}(G)$
\end{cor}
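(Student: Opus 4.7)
The plan is to feed the cut-edge inequality from Proposition~\ref{colon1}(a) into the closed-graph bound of Theorem~\ref{linear strand-closed}. For $i=0$ the claim is immediate, since $\beta_{0,2}(J_G)=|E(G)|=k_2(G)$, so I assume $i\geq 1$.

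First, I apply Proposition~\ref{colon1}(a) to the cut edge $e$ with the pair $(i,i+2)$, obtaining
$$\beta_{i,i+2}(J_G)\;\leq\;\beta_{i,i+2}(J_{G\setminus e})+\beta_{i-1,i}(J_{(G\setminus e)_e}).$$
Next, I show the correction term vanishes. For any graph $H$ the ideal $J_H$ is generated in degree $2$, so in its minimal graded free resolution the shift at homological position $k$ is at least $k+2$; hence $\beta_{k,j}(J_H)=0$ whenever $j<k+2$. Applying this to $H=(G\setminus e)_e$ with $k=i-1$ and $j=i$ (so that $j=k+1<k+2$) gives $\beta_{i-1,i}(J_{(G\setminus e)_e})=0$.

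Since $G\setminus e$ is closed by hypothesis, Theorem~\ref{linear strand-closed} yields $\beta_{i,i+2}(J_{G\setminus e})\leq (i+1)\,k_{i+2}(G\setminus e)$. Finally, deleting an edge from $G$ can only destroy cliques and cannot create any, so every clique of $G\setminus e$ is a clique of $G$, giving $k_{i+2}(G\setminus e)\leq k_{i+2}(G)$. Chaining the three inequalities produces the desired bound $\beta_{i,i+2}(J_G)\leq (i+1)\,k_{i+2}(G)$.

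No real obstacle is anticipated: the argument is a short assembly of tools already in place in the paper. The only step requiring a brief justification beyond citation is the vanishing of $\beta_{i-1,i}(J_{(G\setminus e)_e})$, and this is a routine consequence of the fact that minimal graded free resolutions of ideals generated in a single degree have strictly increasing minimum shifts.
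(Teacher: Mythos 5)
Your proof is correct and follows essentially the same route as the paper: apply Proposition~\ref{colon1}(a) to the cut edge, observe the correction term $\beta_{i-1,i}(J_{(G\setminus e)_e})$ vanishes for degree reasons, and invoke Theorem~\ref{linear strand-closed} on the closed graph $G\setminus e$. The only cosmetic differences are that you spell out the vanishing of the correction term (the paper leaves it implicit, citing a corollary of \cite{KS}) and that you use the inequality $k_{i+2}(G\setminus e)\leq k_{i+2}(G)$ where the paper notes equality holds for $i\geq 1$ because a cut edge lies in no larger clique; both suffice.
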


\begin{proof}
Clearly, $\beta_{0,2}(J_G)=k_2(G)$. By Proposition~\ref{colon1} (see also \cite[Corollary~3.12]{KS}), we have $\beta_{i,i+2}(J_G)\leq \beta_{i,i+2}(J_{G\setminus e})$, for all $i$. On the other hand, we have that $k_{i+2}(G)=k_{i+2}(G\setminus e)$, for $i\geq 1$. Now, it suffices to apply Proposition~\ref{linear strand-closed}, since $G\setminus e$ is a closed graph.
\end{proof}

For example, the graph shown in Figure~\ref{Switching1} satisfies the condition of the above proposition. Note that in the above, $e$ is a cut edge, not necessarily a free cut edge. \\

\textbf{Acknowledgments:} The authors would like to thank the referee for suggesting Lemma~\ref{lemma} which shortened the original proof of the main theorem of the paper, and also for his or her other valuable comments. Moreover, the authors would like to thank to the Institute for Research in Fundamental Sciences (IPM) for financial support. The research of the first author was in part supported by a grant from IPM (No. 93050220).

\providecommand{\byame}{\leavevmode\hbox
to3em{\hrulefill}\thinspace}

\end{document}